\numberwithin{equation}{section}
\newtheorem{Thm}[equation]{Theorem}
\newtheorem*{Thm*}{Theorem}
\newtheorem{Lem}[equation]{Lemma}
\theoremstyle{remark}
\newtheorem{Cons}[equation]{Construction}
\newtheorem{Def}[equation]{Definition}
\newtheorem{Exa}[equation]{Example}
\newtheorem{Rem}[equation]{Remark}
\newcommand{\nc}{\newcommand}
\nc{\dmo}{\DeclareMathOperator}
\dmo{\Abelem}{Abelem}
\dmo{\Ab}{Ab}
\dmo{\Add}{Add}
\dmo{\Coind}{CoInd}
\dmo{\cone}{cone}
\dmo{\Der}{D}
\dmo{\End}{End}
\dmo{\Ext}{Ext}
\dmo{\Hom}{Hom}
\dmo{\Ho}{Ho}
\dmo{\Id}{Id}
\dmo{\id}{id}
\dmo{\Img}{Im}
\dmo{\Ind}{Ind}
\dmo{\Ker}{Ker}
\dmo{\KK}{KK}
\dmo{\Mod}{Mod}
\dmo{\Obj}{Obj}
\dmo{\opname}{op}
\dmo{\Or}{Or}
\dmo{\proj}{proj}
\dmo{\Res}{Res}
\dmo{\rmH}{H}
\dmo{\rmL}{\mathsf{L}}
\dmo{\rmR}{\mathsf{R}}
\dmo{\Rname}{R}
\dmo{\Shv}{Shv}
\dmo{\SH}{SH}
\dmo{\smallb}{b}
\dmo{\Spc}{Spc}
\dmo{\Spec}{Spec}
\dmo{\supp}{supp}
\nc{\Beren}[1]{{\color{MidnightBlue}#1}}
\nc{\Ivo}[1]{{\color{OliveGreen}#1}}
\nc{\Paul}[1]{{\color{BlueViolet}#1}}
\nc{\adjto}{\rightleftarrows}
\nc{\aka}{{a.\,k.\,a.}\ }
\nc{\Alg}{\mathrm{Alg}}
\nc{\bbA}{\mathbb{A}}
\nc{\bbC}{\mathbb{C}}
\nc{\cat}[1]{\mathscr{#1}}
\nc{\colim}{\mathop{\mathrm{colim}}}
\nc{\Cont}{\mathrm{C}} 
\nc{\cO}{\mathcal{O}}
\nc{\csbull}{\raisebox{1.25pt}{${\scriptscriptstyle\bullet}$}}
\nc{\eg}{{\sl e.g.}}
\nc{\eps}{\epsilon}
\nc{\hook}{\hookrightarrow}
\nc{\Idcat}[1]{\Id_{\cat{#1}}}
\nc{\ie}{{\sl i.e.}}
\nc{\into}{\mathop{\rightarrowtail}}
\nc{\inv}{^{-1}}
\nc{\isoto}{\buildrel \sim\over\to}
\nc{\LL}{\rmL\!}
\nc{\Mid}{\,\big|\,}
\nc{\MModcat}[1]{\MMod_{\cat #1}}%
\nc{\MMod}{\,\text{-}\Mod}%
\nc{\onto}{\mathop{\twoheadrightarrow}}
\nc{\op}{^{\opname}}
\nc{\otoo}[1]{\overset{#1}{\,\too\,}}
\nc{\RR}{\rmR\!}
\nc{\SET}[2]{\big\{\,#1\Mid#2\,\big\}}
\nc{\smashh}{\wedge}
\nc{\suppcat}[1]{\supp(\cat #1)}
\nc{\too}{\mathop{\longrightarrow}\limits}
\nc{\unit}{\mathbb{1}}
\begin{document}


\title[Restriction to subgroups as \'etale extensions]{Restriction to finite-index subgroups as \'etale extensions in topology, KK-theory and geometry}
\author{Paul Balmer}
\author{Ivo Dell'Ambrogio}
\author{Beren Sanders}
\date{February 9, 2015}

\address{Paul Balmer, Mathematics Department, UCLA, Los Angeles, CA 90095-1555, USA}
\email{balmer@math.ucla.edu}
\urladdr{http://www.math.ucla.edu/~balmer}

\address{Ivo Dell'Ambrogio, Laboratoire de Mat\'ematiques Paul Painlev\'e, Universit\'e de Lille~1, Cit\'e Scientifique -- B\^at.~M2, 59665 Villeneuve-d'Ascq Cedex, France}
\email{ivo.dellambrogio@math.univ-lille1.fr}
\urladdr{http://math.univ-lille1.fr/~dellambr/}

\address{Beren Sanders, Mathematics Department, UCLA, Los Angeles, CA 90095-1555, USA}
\email{beren@math.ucla.edu}
\urladdr{http://www.math.ucla.edu/~beren}

\begin{abstract}
	For equivariant stable homotopy theory, equivariant KK-theory and equivariant derived categories, we show  how restriction to a subgroup of finite index yields a finite commutative separable extension, analogous to finite \'etale extensions in algebraic geometry.
\end{abstract}

\subjclass[2010]{13B40, 
18E30; 
55P91, 
19K35, 
14F05. 
}
\keywords{Restriction, equivariant triangulated categories, separable, \'etale}

\thanks{First-named author supported by NSF grant~DMS-1303073.}
\thanks{Second-named author partially supported by the Labex CEMPI (ANR-11-LABX-0007-01)}

\maketitle


\vskip-\baselineskip\vskip-\baselineskip
\tableofcontents
\vskip-\baselineskip\vskip-\baselineskip\vskip-\baselineskip

\section{Introduction and main results}
\medskip

In linear representation theory of discrete groups, the first-named author proved that restriction to a finite index subgroup can be realized as a finite \'etale extension; see \cite[Part~I]{Balmer15}.
(The exact statement is a special case of Theorem~\ref{thm:intro_DG} below.) A priori, this result of~\cite{Balmer15} seems very module-theoretic in nature. The goal of the present article is horizontal generalization to a broad array of equivariant settings, from topology to analysis. Specifically, we prove the following three results, in which the reader should feel free to assume that the group~$G$ is \emph{finite}, if so inclined.

\begin{Thm}
\label{thm:intro_SH}%
Let $G$ be a compact Lie group and let $H \leq G$ be a closed subgroup of finite index. Then the suspension $G$-spectrum $A^G_H:=\Sigma^\infty G/H_+$ is a commutative separable ring object in the equivariant stable homotopy category~$\SH(G)$. Moreover, there is an equivalence of categories $\SH(H) \cong A^G_H\MMod_{\SH(G)}$ between $\SH(H)$ and the category of left $A^G_H$-modules in~$\SH(G)$ under which the restriction functor $\SH(G) \to \SH(H)$ becomes isomorphic to the extension-of-scalars functor $\SH(G) \to A^G_H\MMod_{\SH(G)}$.
\end{Thm}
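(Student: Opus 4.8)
The plan is to prove this theorem by recognizing that the whole statement is really an instance of a general categorical phenomenon: whenever we have an ambidextrous (Frobenius) adjunction between tensor-triangulated categories arising from a finite-index inclusion, the restriction functor is monadic and the associated monad is given by tensoring with a commutative separable ring object. So the strategy is to reduce everything to verifying the hypotheses of such a general framework for the specific adjunction $\Res^G_H \dashv \Ind^G_H$ (equivalently $\Res^G_H \dashv \Coind^G_H$) in equivariant stable homotopy theory.

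\medskip

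First I would set up the two adjunctions between $\SH(G)$ and $\SH(H)$: induction $\Ind^G_H$ is left adjoint to restriction $\Res^G_H$, while coinduction $\Coind^G_H$ is right adjoint to it. The decisive input is the \emph{Wirthm\"uller isomorphism}, which for a finite-index inclusion identifies $\Ind^G_H$ with $\Coind^G_H$ (up to a twist by a representation sphere that is trivial in the finite-index case since $G/H$ is finite and the relevant tangent representation vanishes). This makes $\Res^G_H$ into a functor possessing a two-sided adjoint that agrees on both sides, i.e.\ the adjunction is ambidextrous. Combined with the projection formula $\Ind^G_H(\Res^G_H(X)\smashh Y)\cong X\smashh\Ind^G_H(Y)$ — which holds because $\Ind^G_H$ is a map of module categories over $\SH(G)$ — one sees that the monad $\Res^G_H\Ind^G_H$ on $\SH(G)$ is isomorphic to $(-)\smashh A^G_H$ where $A^G_H=\Ind^G_H(\unit_H)=\Sigma^\infty G/H_+$. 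The multiplication and unit of the monad then equip $A^G_H$ with the structure of a ring object, and one checks commutativity directly from the fact that $G/H_+$ is a commutative comonoid/monoid coming from the diagonal and the fold map.

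\medskip

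The separability of $A^G_H$ is where the finite-index hypothesis does its real work, and this is the step I expect to be the main obstacle. Separability means the multiplication $A^G_H\smashh A^G_H\to A^G_H$ admits an $A^G_H$-bimodule section. Concretely, $A^G_H\smashh A^G_H\cong\Sigma^\infty(G/H\times G/H)_+$, and the diagonal copy $G/H\hook G/H\times G/H$ is \emph{open and closed} precisely because $G/H$ is finite; the resulting splitting of the suspension spectrum off the diagonal furnishes the required bimodule section. I would make this rigorous using the ambidexterity set up above: the unit and counit of the ambidextrous adjunction, together with the triangle identities, produce the separability idempotent, so that separability is formally a consequence of the Frobenius structure. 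This is exactly the pattern isolated in \cite{Balmer15} in the representation-theoretic setting, and the content here is to check that the Wirthm\"uller isomorphism supplies the same formal structure in $\SH(G)$.

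\medskip

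Finally, to obtain the equivalence $\SH(H)\cong A^G_H\MMod_{\SH(G)}$ under which $\Res^G_H$ becomes extension of scalars, I would invoke the Eilenberg--Moore / Barr--Beck monadicity theorem. The comparison functor $\SH(H)\to A^G_H\MMod_{\SH(G)}$ is built from the adjunction, and to see it is an equivalence I must verify that $\Res^G_H$ is conservative and preserves the relevant (co)equalizers. Conservativity of restriction to a finite-index subgroup follows from the ambidexterity: the unit $\unit_H\to\Res^G_H\Coind^G_H\unit_H$ is a split monomorphism (the composite $\Res\to\Res\Ind=\Res\Coind\to\Res$ is multiplication by the index, which is a unit on a suitable retract, or more robustly the counit/unit splittings detect nonzero objects), so an object killed by $\Res^G_H$ must already be zero. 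Because $\SH(G)$ is idempotent-complete and the monad $(-)\smashh A^G_H$ is separable, the Eilenberg--Moore category of modules coincides with its Kleisli-type completion and monadicity applies cleanly. The identification of $\Res^G_H$ with extension of scalars is then immediate from the construction of the comparison functor, since by definition it sends an object to its free module, which is precisely $\Res^G_H$ followed by the module structure. Throughout, the genuinely topological ingredient is the Wirthm\"uller isomorphism for finite-index inclusions; everything downstream is the formal $2$-categorical machinery of separable monads, so the main difficulty is to state and exploit ambidexterity correctly rather than to perform any hard computation.
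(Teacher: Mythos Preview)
Your overall architecture matches the paper's: Wirthm\"uller plus the projection formula identify the monad $\Coind^G_H\Res^G_H$ on $\SH(G)$ with $\Sigma^\infty(G/H)_+\smashh(-)$, and separability comes from the diagonal $G/H\hookrightarrow G/H\times G/H$ being clopen. Two points deserve correction, though, and one of them is substantive.

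First, a slip: the monad you want on $\SH(G)$ is $\Coind^G_H\circ\Res^G_H$ (equivalently $\Ind^G_H\circ\Res^G_H$ via Wirthm\"uller), not $\Res^G_H\circ\Ind^G_H$, which lives on $\SH(H)$.

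Second, and more seriously, your monadicity step is off. For the adjunction $\Res^G_H\dashv\Coind^G_H$, Barr--Beck requires conservativity of the \emph{right} adjoint $\Coind^G_H$, not of $\Res^G_H$; your argument is aimed at the wrong functor. Moreover, the ``multiplication by the index'' heuristic you invoke is not reliable here (the index need not be invertible in the sphere). The paper sidesteps Barr--Beck entirely: it proves the elementary lemma that if the counit $\eps:FU\to\Id$ of an adjunction between idempotent-complete additive categories admits a natural section, then the adjunction is automatically (separably) monadic. The section of $\eps$ is then built explicitly at the level of based $H$-spaces, using that $G_+$ splits $H$-equivariantly as $H_+\vee(G\smallsetminus H)_+$ when $G/H$ is discrete; this is exactly the ambidexterity you had in mind, since under Wirthm\"uller that section is the unit of $\Ind^G_H\dashv\Res^G_H$. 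Once the counit splits, conservativity of $\Coind^G_H$ is immediate (any $X$ with $\Coind^G_H X=0$ is a retract of $\Res^G_H\Coind^G_H X=0$), so your Barr--Beck route can be repaired---but the paper's criterion is both simpler and avoids verifying preservation of split coequalizers in a triangulated setting.
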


\begin{Thm}
\label{thm:intro_KK}%
Let $G$ be a second countable locally compact Hausdorff group and let $H\leq G$ be a closed subgroup of finite index. Then the finite-dimensional algebra $A^G_H:=\bbC(G/H)$ is a commutative separable ring object in the equivariant Kasparov category~$\KK(G)$ of $G$-C*-algebras. Moreover, there is an equivalence of categories $\KK(H) \cong A^G_H\MMod_{\KK(G)}$ between $\KK(H)$ and the category of left \mbox{$A^G_H$-modules} in~$\KK(G)$ under which the restriction functor $\KK(G) \to \KK(H)$ becomes isomorphic to the extension-of-scalars functor $\KK(G) \to A^G_H\MMod_{\KK(G)}$.
\end{Thm}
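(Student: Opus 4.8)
The argument will parallel that for Theorem~\ref{thm:intro_SH}, and I expect both to be special cases of one abstract descent mechanism; here I indicate the KK-theoretic incarnation.

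Since $[G:H]<\infty$, the homogeneous space $G/H$ is a finite discrete $G$-set, so $A^G_H=\bbC(G/H)$ is the finite-dimensional commutative $G$-C*-algebra $\bbC^{[G:H]}$ on which $G$ merely permutes the minimal idempotents $(e_x)_{x\in G/H}$. Pointwise multiplication $\mu\colon A^G_H\otimes A^G_H\to A^G_H$ and the inclusion of constants $\eta\colon\unit\to A^G_H$ are $G$-equivariant $*$-homomorphisms, hence morphisms of $\KK(G)$ (the algebra being nuclear, the minimal and maximal tensor products agree), and they make $A^G_H$ a commutative ring object. For separability I would produce the bimodule section $\sigma\colon A^G_H\to A^G_H\otimes A^G_H$ determined by the idempotent $\sum_{x\in G/H}e_x\otimes e_x$; this element is $G$-fixed, so $\sigma$ is again an equivariant $*$-homomorphism, and $\mu\circ\sigma=\id$ together with the bimodule property are immediate on idempotents.

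For the module statement I would use the adjunction $\Res^G_H\dashv\Coind_H^G$ of Kasparov and Meyer--Nest, in which coinduction sends an $H$-C*-algebra to its algebra of $H$-equivariant functions $G\to(-)$ and satisfies $\Coind_H^G(\unit)\cong\bbC(G/H)=A^G_H$; for finite index one moreover has ambidexterity $\Ind_H^G\cong\Coind_H^G$. As restriction is strong monoidal, its right adjoint $\Coind_H^G$ is lax monoidal, so $\Coind_H^G(\unit)=A^G_H$ acquires a ring structure and $\Coind_H^G$ lifts to a functor
\[
\widetilde{\Coind}\colon\KK(H)\too A^G_H\MMod_{\KK(G)}
\]
landing over the forgetful functor $U$; this agrees with the Eilenberg--Moore comparison functor for the monad of the adjunction. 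The crux is to identify that monad $\Coind_H^G\Res^G_H$ with $A^G_H\otimes(-)$: I would establish the projection formula $\Coind_H^G\Res^G_H(B)\cong\Coind_H^G(\unit)\otimes B=A^G_H\otimes B$, naturally and compatibly with the monad multiplications, the point being that concretely $\Coind_H^G\Res^G_H(B)\cong\bigoplus_{G/H}B$ with the diagonal $G$-action, which is manifestly $A^G_H\otimes B$.

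Granting this, Beck's theorem reduces the claim that $\widetilde{\Coind}$ is an equivalence to the monadicity of $\Coind_H^G$. The only substantial hypothesis is conservativity, which I would extract from the Mackey double-coset decomposition of $\Res^G_H\Coind_H^G$: the identity double coset exhibits every $C\in\KK(H)$ as a retract of $\Res^G_H\Coind_H^G(C)$, so $\Coind_H^G(C)=0$ forces $C=0$; the requisite $\Coind_H^G$-split coequalizers are absolute and exist by idempotent-completeness (equivalently by separability of $A^G_H$). Once $\widetilde{\Coind}$ is an equivalence, the identification of $\Res^G_H$ with extension-of-scalars is formal: extension-of-scalars is the left adjoint of $U$ while $\Res^G_H$ is the left adjoint of $\Coind_H^G=U\circ\widetilde{\Coind}$, so uniqueness of adjoints yields $\widetilde{\Coind}\circ\Res^G_H\cong A^G_H\otimes(-)$. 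I expect the genuine obstacle to be entirely KK-theoretic rather than categorical, namely making the adjunction and, above all, the projection formula precise at the level of Kasparov's bivariant category---where morphisms are KK-classes rather than $*$-homomorphisms---and checking that the resulting isomorphism of endofunctors respects the monad structure; the descent step itself is soft, being governed by the separability secured at the outset.
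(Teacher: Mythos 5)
Your proposal follows essentially the same strategy as the paper: the same ring object $\bbC(G/H)$ with pointwise multiplication and the diagonal section $\sigma$, the same restriction--coinduction adjunction with $\Coind_H^G(\unit)\cong\bbC(G/H)$, the same identification of the monad $\Coind_H^G\Res^G_H$ with $A^G_H\otimes(-)$ via the projection formula, and monadicity ultimately extracted from the fact that every $B$ is naturally a retract of $\Res^G_H\Coind_H^G(B)$. Two remarks on where your packaging differs. First, you invoke Beck's theorem (conservativity plus $\Coind$-split coequalizers), whereas the paper uses the purpose-built Lemma~\ref{Lem:sep_mon} (from \cite{Balmer15}): in a triangulated category such as $\KK(H)$ the coequalizer hypothesis of Beck's theorem is not available for free, and the precise point---which your parenthetical only gestures at---is that the natural section $\xi$ of the counit turns the canonical pairs into idempotents on objects of the form $\Res^G_H(x)$, whose images exist by idempotent-completeness; that is exactly the content of the paper's lemma, so your route is the same mechanism in different clothing (likewise your ``identity double coset'' retraction is precisely the paper's section $\xi$; no Mackey formula is needed or proved). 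Second, the step you explicitly defer as ``the genuine obstacle''---making the adjunction, the section and the projection formula live in $\KK$ rather than at the level of $*$-homomorphisms---is indeed where the paper's KK-specific work lies, and it is resolved as follows: all the relevant data ($\eta$, $\eps$, $\xi$, $\pi$) are constructed as honest equivariant $*$-homomorphisms in $\Alg(G)$ and $\Alg(H)$ (Construction~\ref{Cons:KK} and Lemma~\ref{lem:xi-proj-KK}), and they descend to the Kasparov categories because $\Alg(G)\to\KK(G)$ is a localization at the $G$-equivariant KK-equivalences (by Meyer's universal property \cite{Meyer00}) and both $\Res^G_H$ and $\Coind_H^G$ preserve KK-equivalences \cite{Meyer08}. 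As written, your argument is therefore a correct plan with this one step left open; filling it requires the localization description of $\KK(G)$ rather than any further categorical input.
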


\begin{Thm}
\label{thm:intro_DG}%
Let $G$ be a discrete group acting on a ringed space~$S$ (for instance, a scheme) and let $H\leq G$ be a subgroup of finite index. Then the free $\cO_S$-module $A^G_H:=\cO_S(G/H)$ on~$G/H$ is a commutative separable ring object in the derived category~$\Der(G;S)$ of $G$-equivariant sheaves of $\cO_S$-modules. Moreover, there is an equivalence of categories $\Der(H;S) \cong A^G_H\MMod_{\Der(G;S)}$ between $\Der(H;S)$ and the category of left $A^G_H$-modules in~$\Der(G;S)$ under which the restriction functor $\Der(G;S) \to \Der(H;S)$ becomes isomorphic to the extension-of-scalars functor $\Der(G;S) \to A^G_H\MMod_{\Der(G;S)}$.
\end{Thm}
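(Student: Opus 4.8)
The plan is to realize all three theorems as instances of a single categorical statement about separable monads arising from restriction--(co)induction adjunctions, and then to verify its hypotheses in each setting; I spell out the case of Theorem~\ref{thm:intro_DG}, the other two being formally identical once the right functors are in place. Write $\cat{K}=\Der(G;S)$ and $\cat{L}=\Der(H;S)$. The restriction functor $\Res^G_H\colon\cat{K}\to\cat{L}$ is a coproduct-preserving tensor-triangulated functor, and because $[G:H]<\infty$ it is ambidextrous: its left adjoint $\Ind^G_H$ and its right adjoint $\Coind^G_H$ are naturally isomorphic (over a ringed space with discrete $G$ this is merely the identification of the finite coproduct $\bigoplus_{G/H}$ with the finite product, there being no Wirthm\"uller twist). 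First I would make this adjunction explicit on equivariant $\cO_S$-modules and record the projection formula $\Coind^G_H(\Res^G_H(M)\otimes N)\cong M\otimes\Coind^G_H(N)$. Together with $\Res^G_H(\unit)=\unit$ this identifies the monad $\Coind^G_H\Res^G_H$ on~$\cat{K}$ with $A^G_H\otimes(-)$, where $A^G_H=\Coind^G_H(\unit)\cong\Ind^G_H(\unit)=\cO_S(G/H)$, and its monad structure with the ring structure on $A^G_H$.

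Next I would pin down the ring-theoretic assertions directly. As an $\cO_S$-module $A^G_H$ is the finite product $\prod_{gH\in G/H}\cO_S$; it carries the componentwise commutative multiplication $\mu$, with orthogonal idempotents $e_{gH}$ summing to the unit, and the $G$-action simply permutes the $e_{gH}$ along the left action on $G/H$. Hence $\mu$ and the unit are $G$-equivariant and $A^G_H$ is a \emph{commutative} ring object. For \emph{separability} I would exhibit the diagonal section $\sigma\colon A^G_H\to A^G_H\otimes A^G_H$ given by $\sigma=\sum_{gH}e_{gH}\otimes e_{gH}$: since the sum runs over the entire $G$-set $G/H$ it is $G$-invariant, hence a morphism of~$\cat{K}$; it is an $A^G_H$-bimodule map and satisfies $\mu\circ\sigma=\sum_{gH}e_{gH}=\id$. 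This part is routine.

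The crux, and the step I expect to be the main obstacle, is the equivalence $\cat{L}\simeq A^G_H\MMod_{\cat{K}}$. By construction $A^G_H\MMod_{\cat{K}}$ is the Eilenberg--Moore category of the monad $A^G_H\otimes(-)=\Coind^G_H\Res^G_H$, and the comparison functor $\cat{L}\to A^G_H\MMod_{\cat{K}}$ is precisely the one under which $\Res^G_H$ becomes extension of scalars; so the theorem amounts to \emph{monadicity} of the adjunction $\Res^G_H\dashv\Coind^G_H$. The obstruction is that $\cat{K}$ is merely triangulated and need not possess the Beck coequalizers demanded by Beck's theorem. This is exactly where separability enters: the section $\sigma$ supplies, functorially in each $A^G_H$-module, a contraction exhibiting the relevant Beck coequalizer as a \emph{split} coequalizer, which therefore exists and is preserved by every functor. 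The remaining hypothesis, conservativity of the right adjoint $\Coind^G_H$, is immediate from the Mackey formula, whereby $\Id_{\cat{L}}$ is a retract of $\Res^G_H\Coind^G_H$ (the summand indexed by the double coset of~$1$). Beck's theorem in its split form then yields the equivalence, with $\Res^G_H$ corresponding to extension of scalars; idempotent-completeness of $\cat{L}$ secures essential surjectivity, every $A^G_H$-module being a retract of a free one. I would isolate this reasoning as a general proposition on separable ambidextrous adjunctions and invoke it verbatim for Theorems~\ref{thm:intro_SH} and~\ref{thm:intro_KK}, where only the construction of the functors and the identification of $A^G_H$ change.
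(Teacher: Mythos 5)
Your overall strategy coincides with the paper's: identify the monad $\Coind_H^G\Res^G_H$ with $A^G_H\otimes(-)$ via the projection formula, prove monadicity by trading Beck coequalizers for idempotent splittings, and certify separability of $A^G_H$ by the diagonal section (this is the paper's Theorem~\ref{Thm:general}, fed by Lemma~\ref{lem:xi-proj-DG}). However, the step you yourself single out as the crux is mis-wired, and as written it would fail. The contraction exhibiting the Beck coequalizer of a module $(x,\rho)$ as split must be a morphism $t:\Res^G_H(x)\to\Res^G_H(A^G_H\otimes x)$ \emph{in} $\Der(H;S)$, whereas $\sigma$, $\mu$, $\iota$ and $\rho$ are all morphisms of $\Der(G;S)$; any candidate assembled from them is of the form $\Res^G_H(\tau)$ for a $G$-equivariant $\tau:x\to A^G_H\otimes x$, and no such $t$ can work. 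Concretely, take $S=\Spec(\mathbb{F}_p)$, $G=\mathbb{Z}/p$, $H=1$ and $(x,\rho)=(A^G_H,\mu)$ the free rank-one module: the split-coequalizer axioms would force $\rho\circ\tau$ to be an idempotent $G$-endomorphism of $A^G_H=\mathbb{F}_p[\mathbb{Z}/p]$, which is indecomposable as a $G$-module (the group algebra is local), so the resulting coequalizer $\Res^G_H\Img(\rho\circ\tau)$ has rank $0$ or $p$ --- never the rank $1$ of the true value $E\inv(A^G_H,\mu)\cong\unit$. What does supply the contraction is the natural section $\xi:\Id\to\Res^G_H\Coind_H^G$ of the counit --- precisely the Mackey splitting that you deploy only for conservativity, and which, being merely $H$-equivariant, is \emph{not} of the form $\Res^G_H(\tau)$. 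With $t:=\xi_{\Res^G_H(x)}$ one checks, by a diagram chase using naturality of $\xi$ and the module axioms (it is not formal), that $e:=\Res^G_H(\rho)\circ t$ is idempotent, and splitting $e$ yields the split Beck coequalizer. This rewiring is exactly the paper's Lemma~\ref{Lem:sep_mon}, quoted from \cite[Lemma~2.10]{Balmer15}; see also Remark~\ref{rem:sep}, which stresses that separability of the ring object and splitting of the counit are independent conditions, and Theorem~\ref{thm:intro_counterex}, where the monad is still induced by the ring object $U(\unit)$ yet monadicity fails precisely because the counit does not split.

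Once $\xi$ is wired in, your Beck-type argument does go through (only the canonical pairs are needed, split coequalizers are absolute, and conservativity of $\Coind_H^G$ follows from $\xi$ as you say), so it becomes a repackaging of Lemma~\ref{Lem:sep_mon} rather than a different proof. Two smaller omissions remain. First, you should justify the passage to derived categories: the paper constructs the adjunction, the section $\xi$ and the projection isomorphism on the abelian categories $\Shv(G;S)$ and $\Shv(H;S)$, and then observes that $\Res^G_H$, $\Coind_H^G$ (a finite product of pullbacks $t^*$, by finiteness of $G/H$) and $A^G_H\otimes-$ (with $A^G_H$ flat) are exact, so everything derives degreewise with no resolutions. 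Second, the projection formula is asserted but not proved; the paper's proof goes through the non-equivariant identification $\Res^G_1\Coind_H^G\cong\prod_{r\in R}r^*$ over a set of coset representatives $R$, which is where finiteness of $G/H$ enters.
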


These theorems are proved in Sections~\ref{se:SH}, \ref{se:KK} and~\ref{se:DG} respectively.
In all three cases, the multiplication $\mu:A^G_H\otimes A^G_H\to A^G_H$ on the ring object~$A^G_H$ is
characterized by the rule\,:
\begin{equation}
\label{eq:mu}%
\mu(\gamma\otimes\gamma')=\left\{\begin{array}{cl}\gamma&\textrm{if }\gamma=\gamma'\\0&\textrm{if }\gamma\neq\gamma'
\end{array}\right. \textrm{ for all }\gamma,\gamma'\in G/H.
\end{equation}
Let us provide some explanations and motivation.

If not familiar with~\cite{Balmer15}, the reader might be surprised to see that \emph{restriction}
can be interpreted as an \emph{extension}. When we consider a category $\cat C=\cat C(G)$ depending on a group~$G$, like the above $\SH(G)$, $\KK(G)$ or $\Der(G;S)$, and when $H\leq G$ is a subgroup, the rough intuition is that \emph{the $H$-equivariant category $\cat C(H)$ should only be a ``piece" of the corresponding $G$-equivariant category~$\cat C(G)$}. At first, one might naively hope that $\cat C(H)$ is a localization of~$\cat C(G)$, as a category. Although this naive guess essentially always fails, we are going to prove that this intuition is actually valid if one uses a broader, more flexible notion of ``localization.'' This broader notion is conceptually analogous to localization with respect to the \'etale topology in algebraic geometry rather than the Zariski topology. Seen from the perspective of Galois theory, it is not so surprising that extension should be connected to restriction to a smaller group.

Let us be more precise. Consider a category~$\cat C$, like our $\cat C(G)$, equipped with a tensor $\otimes:\cat C\times \cat C \too\cat C$ and consider, as above, a \emph{ring object} $A$ in~$\cat C$ with  associative and unital multiplication $\mu:A\otimes A\to A$ (details are recalled in \S\ref{se:prepa}). The \emph{\mbox{$A$-modules} in~$\cat C$} are simply objects $x$ in $\cat C$ together with an \mbox{$A$-action} $A\otimes x\to x$ satisfying the usual rules. We can form the category $A\MModcat{C}$ of $A$-modules in~$\cat C$ and we have an \emph{extension-of-scalars} functor $F_{A}:\cat C\to A\MModcat{C}$, which maps $y$ to $A\otimes y$, as one would expect. As in commutative algebra, the ring object $A$ is said to be \emph{separable} if $\mu$ admits a section $\sigma:A\to A\otimes A$ which is $A$-linear on both sides.
A very special example of separability occurs if $\mu$ is an isomorphism (with inverse~$\sigma$) in which case the extension-of-scalars $\cat C\to A\MModcat{C}$ is just a localization of~$\cat C$. But general separable extensions are more flexible than localizations. For instance, in algebraic geometry they include finite \'etale extensions of affine schemes by~\cite[Cor.\,6.6]{Balmer11}.

Separable extensions are particularly nice for another reason, beyond the analogy with the \'etale topology; namely, they can be performed on triangulated categories without resorting to models; see~\cite{Balmer11}. Since all the above categories~$\cat C(G)$ are triangulated, our results establish a connection between these equivariant theories and the ``tensor-triangular geometry" of \'etale extensions, as initiated in~\cite{Balmer13ppb}.

Understanding restriction as an \'etale extension has already found applications in modular representation theory (see~\cite[Part~II]{Balmer15}) and it is legitimate to expect similar developments in our new examples. This will be the subject of further work.

\smallbreak

Let us say a word about our hypothesis that $G/H$ is finite. In Section~\ref{se:counterexample}, we prove the following result which shows that Theorem~\ref{thm:intro_SH} cannot hold without some finiteness assumption on~$G/H$:
\begin{Thm}
\label{thm:intro_counterex}%
	Let $G$ be a connected compact Lie group and let $H \le G$ be a non-trivial finite subgroup.
	Then the right adjoint to the restriction functor $\Res_H^G : \SH(G) \to \SH(H)$ is not faithful. In particular, $\Res^G_H$ is not an extension-of-scalars.
\end{Thm}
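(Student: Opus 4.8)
The plan is to identify the right adjoint concretely, reduce the statement to a generation property of the restrictions, and then detect the failure of that property by a homological invariant whose vanishing encodes the connectedness of~$G$. First I would recall that the right adjoint to $\Res^G_H\colon\SH(G)\to\SH(H)$ is the coinduction functor $\Coind^G_H=F_H(G_+,-)$; it exists by Brown representability, since $\Res^G_H$ preserves coproducts (they are computed as wedges in both categories). The final clause of the statement is then formal: for any commutative ring object $A$ the right adjoint of $F_A\colon\cat C\to A\MModcat{C}$ is the forgetful functor, which is manifestly faithful, so were $\Res^G_H$ isomorphic to an extension-of-scalars its right adjoint $\Coind^G_H$ would be faithful. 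It therefore suffices to prove that $\Coind^G_H$ is \emph{not} faithful.

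A faithful functor reflects zero objects, so it is enough to produce a nonzero $C\in\SH(H)$ with $\Coind^G_H C=0$. By adjunction, $\Coind^G_H C=0$ precisely when $[\Res^G_H Y,C]=0$ for every $Y\in\SH(G)$, i.e.\ when $C$ is right-orthogonal to $\mathcal L:=\mathrm{Loc}\langle\,\Res^G_H Y : Y\in\SH(G)\,\rangle$, the localizing subcategory generated by all restrictions. Since $\Res^G_H$ carries the compact generators $\Sigma^\infty_+(G/L)$ to suspension spectra of finite $H$-CW complexes (each $\Res^G_H(G/L)$ being a compact $H$-manifold), $\mathcal L$ is generated by a set of compact objects and admits a colocalization; any object not lying in $\mathcal L$ then has nonzero colocalization cofiber lying in $\mathcal L^{\perp}$. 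Thus it suffices to exhibit a single $H$-spectrum outside $\mathcal L$, which will force $\mathcal L\neq\SH(H)$ and hence $\mathcal L^{\perp}\neq 0$.

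This is where both hypotheses enter. Fix $e\neq h\in H$ and a subgroup $K=\langle h\rangle$ of prime order~$p$ (available because $H\neq 1$), and let $\rho$ be the nontrivial rational irreducible $K$-representation. I would consider the functor $\Xi\colon\SH(H)\to\{\text{graded }\mathbb{Q}(\zeta_p)\text{-vector spaces}\}$ sending $X$ to the $\rho$-isotypic summand of the graded $\mathbb{Q}[K]$-module $\rmH_*(UX;\mathbb{Q})$, where $UX$ denotes the underlying spectrum of $X$ together with its residual $K$-action. As $\mathbb{Q}[K]$ is semisimple, $\Xi$ is exact and preserves coproducts, so $\ker\Xi$ is localizing. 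The key computation is that $\Xi$ kills every restriction: for $Y\in\SH(G)$ the $K$-action on $UY$ is the restriction of a $G$-action, and since $G$ is \emph{connected} each element acts by a self-equivalence homotopic to the identity, hence trivially on $\rmH_*(UY;\mathbb{Q})$; so this homology carries no $\rho$-part and $\Xi(\Res^G_H Y)=0$. Therefore $\mathcal L\subseteq\ker\Xi$. On the other hand $\Xi(\Sigma^\infty_+(H/e))\neq 0$, because the underlying homology of the free orbit is the regular representation $\mathbb{Q}[K]^{\oplus |H|/p}$, whose $\rho$-part is nonzero. Hence $\Sigma^\infty_+(H/e)\notin\mathcal L$, the restrictions do not generate $\SH(H)$, and $\Coind^G_H$ is not faithful.

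The main obstacle I anticipate is the careful execution of the reduction in the triangulated setting: verifying that $\Res^G_H$ preserves compactness (so that the generating family consists of compact objects and the colocalization of $\mathcal L$ is available), and confirming that $\ker\Xi$ really is a localizing subcategory containing every restriction. Everything else is formal; the two genuinely essential ingredients are connectedness of~$G$ (which trivializes the $K$-action on the homology of underlying spectra, so that $\Xi$ annihilates all of~$\mathcal L$) and nontriviality of~$H$ (which supplies a nontrivial~$K$, and hence a nonzero invariant on the free orbit).
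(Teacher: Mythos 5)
Your proposal is correct, but it takes a genuinely different route from the paper's proof. The paper argues through the counit: if $\Coind_H^G$ were faithful, every counit $\eps_D$ would split in $\SH(H)$; Lemmas~\ref{Lem:counterexample_derivedcounit} and~\ref{Lem:counterexample_loops} transport such a splitting from the homotopy category down to a space-level splitting up to homotopy (model-category bookkeeping, then $\Omega^\infty$), Lemma~\ref{Lem:counterexample_space} shows that for connected $G$ such a splitting forces a \emph{discrete} based $H$-space to have trivial action, and the contradiction is furnished by an equivariant Eilenberg--MacLane spectrum whose zeroth space is $\mathbb{Z}H$ with its nontrivial translation action. You instead stay entirely inside the triangulated category: $\Coind_H^G$ kills precisely the objects right-orthogonal to the localizing subcategory $\mathcal{L}$ generated by all restrictions, so it suffices to prove $\mathcal{L}\neq\SH(H)$, which you detect with the coproduct-preserving homological functor $\Xi$; nontriviality of $H$ gives $\Xi(\Sigma^\infty_+ H)\neq 0$, while connectedness of $G$ gives $\Xi(\mathcal{L})=0$. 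Your route needs no model categories, no $\Omega^\infty$, and no Mackey-functor Eilenberg--MacLane spectra, and it proves the stronger statement that $\Coind_H^G$ annihilates a nonzero object; what it consumes instead is localization theory plus one substantive homotopy-theoretic fact that you state only in passing, namely that for a genuine $G$-spectrum $Y$ every element of the identity component of $G$ acts as the identity on $\Res^G_1 Y$ in $\SH$. That fact is true --- apply $\Res^G_1$ to the counit $G_+\smashh\Res^G_1 Y\to Y$ of the induction-restriction adjunction to get an action map $G_+\smashh UY\to UY$ in $\SH$ whose restriction along $\{e\}_+\hook G_+$ is the identity, and note that $\{g\}_+\hook G_+$ is homotopic to $\{e\}_+\hook G_+$ when $G$ is connected, so $g$ acts trivially on $\rmH_*(UY;\mathbb{Q})$ --- but making it precise for LMS spectra (change of universe, compatibility of the residual $K$-action on $U\Res^G_H Y$ with the $G$-action) is exactly where the point-set care resides, playing the role of the paper's three lemmas. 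Your other implicit step is also fine but worth spelling out: $\mathcal{L}$ equals the localizing subcategory generated by the restrictions $\Res^G_H\Sigma^\infty_+(G/L)$ of the compact generators, because $\SET{Y\in\SH(G)}{\Res^G_H Y\in\mathrm{Loc}\langle\Res^G_H\Sigma^\infty_+(G/L)\rangle}$ is localizing and contains the generators of $\SH(G)$; these restrictions are compact by Illman's finite $H$-CW structures on compact $H$-manifolds, so finite localization supplies the triangle showing $\mathcal{L}^\perp\neq 0$. Both proofs use the hypotheses in the same essential way: connectedness trivializes an action, nontriviality of $H$ produces a nontrivial one.
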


Let us explain what is going on. The proofs of Theorems~\ref{thm:intro_SH}--\ref{thm:intro_DG} all follow a similar pattern that we isolate in the preparatory Section~\ref{se:prepa}. In technical terms, we prove \emph{separable monadicity} of the standard restriction-coinduction adjunction and then show that the monad associated to this adjunction is given by a separable ring object. In an ideal world, for a general subgroup $H$ of a general group~$G$, we would expect the first property (monadicity) to hold when $G/H$ is discrete and the second (the ring object) when $G/H$ is compact. Then our hypothesis that $G/H$ is finite would simply result from assuming simultaneously that $G/H$ is discrete and compact. That would be the ideal treatment. However, things turn out to be more complicated, mostly due to the current state-of-development of our examples.

Firstly, $G$-equivariant stable homotopy theory is simply not developed for non-compact groups. Similarly, $G$-equivariant KK-theory, although defined for locally compact groups, lacks enough adjoints if we do not assume $G/H$ compact (see the technical reasons in Remark~\ref{Rem:IndCoInd}). These restrictions prevent a uniform treatment beyond the case of $G/H$ finite. Trying to lift those restrictions would be a massive undertaking, going way beyond the goal of the present paper. We found our results diverse enough as they are, without trying to push them into unnecessary complications. For instance, Theorems~\ref{thm:intro_SH}--\ref{thm:intro_DG} are already interesting for finite groups, where they hold unconditionally. For the same reasons, we renounced treating the $G$-equivariant derived category over non-discrete groups, \`a la~\cite{BernsteinLunts94}.

Our present goal is to show that restriction to a subgroup can be understood as an \'etale extension in a broad range of settings beyond representation theory. The above sample should provide convincing evidence of this ubiquity and should encourage our readers to try proving similar results for their favorite equivariant categories. It is very likely that future investigations will produce further examples of this phenomenon and we are confident that the method of proof presented in Section~\ref{se:prepa} will be useful for such generalizations.

\medbreak
\section{General approach}
\label{se:prepa}%
\medskip

\subsection*{Separable monadicity}
Let us briefly recall some standard facts about monads and separability; we refer the reader to~\cite{Balmer11} and~\cite{Balmer15} for further details.
A \emph{monad} on a category $\cat C$ consists of an endofunctor $\bbA:\cat C \to \cat C$ equipped with natural transformations
$\mu :\bbA \circ \bbA \to \bbA$ and $\eta : \Id_\cat C \to \bbA$ such that $\mu$ is an associative multiplication ($\mu\circ\bbA\mu=\mu\circ\mu\bbA$) for which $\eta$ is a two-sided identity ($\mu\circ\bbA\eta=\id=\mu\circ\eta\bbA$).
An \emph{$\bbA$-module in $\cat C$} consists of a pair $(x,\rho)$ where $x$ is an object of $\cat C$ and $\rho:\bbA x \to x$ is a morphism
(the ``action'' of $\bbA$ on $x$) making the evident associativity and unit diagrams commute in $\cat C$. A \emph{morphism} of $\bbA$-modules $(x,\rho)$ and $(x',\rho')$ is a morphism $f:x \to x'$ in $\cat C$ commuting with the actions. We denote by $\bbA\MMod_\cat C$ the resulting category of modules, which is part of the \emph{Eilenberg-Moore adjunction}
\mbox{$F_\bbA :\cat C \adjto \bbA\MMod_\cat C:U_\bbA$}.
The left adjoint $F_\bbA$ sends an object $c \in \cat C$ to the \emph{free} \mbox{$\bbA$-module} $F_\bbA(c) := (\bbA c, \mu_c : \bbA \bbA c \to \bbA c)$, and
the right adjoint sends a module $(x,\rho)$ to its underlying object $U_\bbA(x,\rho) := x$.

Any adjunction $F:\cat C \adjto \cat D:U$ with unit $\eta : \Id_\cat C \to UF$ and counit ${\eps : FU \to \Id_\cat D}$
defines a monad $\bbA = (UF,U\eps F,\eta)$ on $\cat C$ and we can consider the Eilenberg-Moore adjunction associated with this monad as above.
There is a unique ``comparison'' functor $E:\cat D \to \bbA\MMod_\cat C$
such that $E\circ F = F_\bbA$ and $U_\bbA \circ E = U$
\begin{equation} \label{Eq:EM_monad}
\begin{gathered}
\xymatrix{
& \cat C \ar@<-2pt>[dl]_F  \ar@<-2pt>[dr]_(.4){F_\bbA} & \\
\cat D \ar@<-2pt>[ur]_(.6)U \ar[rr]_-E && \bbA \MMod_\cat C \ar@<-2pt>[ul]_{U_\bbA}
}
\end{gathered}
\end{equation}
and we say that the adjunction $F\dashv U$ is \emph{monadic} if the comparison functor \mbox{$E: \cat D \to \bbA\MMod_\cat C$} is an equivalence of categories.
Concretely, $E$ is given by $E(d)= (Ud, U\eps_d : UFUd\to Ud)$ on objects $d\in \cat D$ and by~$U(f)$ on morphisms $f: d\to d'$.

We stress that, although the construction of~$E$ is formal, the \emph{property} that it is an equivalence is highly non-trivial and simply fails in general. At the extreme, taking $\cat D$ arbitrary and $\cat C=0$ (hence $\bbA\MModcat{C}=0$) shows that $E$ can get as bad as one wants. Hence, monadicity is a non-trivial property. Note that since $U_\bbA$ is faithful, a necessary condition for $E$ to be an equivalence is faithfulness of~$U$.

A monad $\bbA :\cat C \to \cat C$ is said to be \emph{separable} if
the multiplication $\mu : \bbA \circ \bbA \to \bbA$ admits a natural section $\sigma: \bbA \to \bbA \circ \bbA$ which is $\bbA,\bbA$-linear: $\mu \bbA \circ \bbA \sigma = \sigma \circ \mu = \bbA \mu \circ \sigma \bbA$.

\begin{Lem}\label{Lem:sep_mon}
Let $F: \cat C \leftrightarrows \cat D : U$ be an adjunction between idempotent-complete additive categories, and assume that the counit $\eps: FU\to \Id_\cat D$
admits a section, \ie, a natural morphism $\xi: \Id_\cat D\to FU$ such that $\eps\circ\xi=\id$.
Then the adjunction is \emph{separably monadic}. That is, the monad $UF$ on $\cat C$ is separable and the Eilenberg-Moore comparison functor~$E$ in~\eqref{Eq:EM_monad} is an equivalence.
A quasi-inverse $E\inv : \bbA\MMod_\cat C \stackrel{\sim}{\to} \cat D$ is obtained by sending $(x,\rho)\in \bbA\MMod_\cat C$ to the image $E\inv(x,\rho) := \Img(e)$ of the idempotent $e^2=e:= F(\rho) \circ \xi_{F(x)}$ on~$F(x)\in \cat D$.
\end{Lem}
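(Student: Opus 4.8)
The plan is to squeeze everything out of the single section $\xi$ of the counit, proving the separability statement and the equivalence statement along largely independent lines, and to obtain $E\circ E\inv\cong\Id$ \emph{formally} rather than by direct computation. First I would write down the separability datum: set $\sigma:=U\xi F:\bbA\to\bbA\circ\bbA$, where $\bbA=UF$ and $\mu=U\eps F$. The hypothesis $\eps\circ\xi=\id$ gives $\mu\circ\sigma=U(\eps\circ\xi)F=\id_\bbA$ immediately, so $\sigma$ is a section of $\mu$. The two linearity identities $\sigma\circ\mu=\bbA\mu\circ\sigma\bbA$ and $\sigma\circ\mu=\mu\bbA\circ\bbA\sigma$ then drop out of the naturality of $\xi$ and of $\eps$ respectively: evaluated at an object $c$ each side collapses, after a single naturality step, to $U(\xi_{Fc}\circ\eps_{Fc})$. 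This already shows that the monad $\bbA=UF$ is separable.

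Next I would prove that the comparison functor $E$ is \emph{fully faithful}, using only $\xi$. Faithfulness reduces to that of $U$ (since $U=U_\bbA\circ E$), and $U$ is faithful because each $\eps_d$ is a split epimorphism, split by $\xi_d$; thus $g\circ\eps_d=h\circ\eps_d$ forces $g=h$ after using naturality of $\eps$. For fullness, given a morphism of modules $\phi\colon Ud\to Ud'$, i.e.\ a map satisfying $\phi\circ U\eps_d=U\eps_{d'}\circ UF\phi$, I would exhibit a preimage explicitly by setting $g:=\eps_{d'}\circ F\phi\circ\xi_d\colon d\to d'$ and computing $Ug=U\eps_{d'}\circ UF\phi\circ U\xi_d=\phi\circ U(\eps_d\circ\xi_d)=\phi$, where the module relation and $\eps_d\circ\xi_d=\id$ are used.

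Then comes \emph{essential surjectivity}, which is where separability genuinely pays off and which I expect to be the main obstacle. The action $\rho\colon(\bbA x,\mu_x)\to(x,\rho)$ is a morphism of modules by associativity, and it admits the section $\zeta:=\bbA\rho\circ\sigma_x\circ\eta_x$, for which $\rho\circ\zeta=\rho\circ\bbA\rho\circ\sigma_x\circ\eta_x=\rho\circ\mu_x\circ\sigma_x\circ\eta_x=\rho\circ\eta_x=\id_x$; checking that $\zeta$ is itself $\bbA$-linear is the delicate calculation, and it is exactly here that the bilinearity of $\sigma$ is needed. Granting this, every module is a retract of the free module $F_\bbA(x)=E(Fx)$, hence the image of some idempotent $\pi$ on $E(Fx)$ in $\bbA\MMod_\cat{C}$. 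Because $E$ is fully faithful, $\pi=E(\bar e)$ for a unique idempotent $\bar e$ on $Fx$ in $\cat{D}$; splitting $\bar e$ in the idempotent-complete $\cat{D}$ and applying the additive functor $E$ splits $\pi$, so $E(\Img\bar e)\cong\Img\pi\cong(x,\rho)$. Thus $E$ is an equivalence. The transport of the splitting through $E$ and the module-linearity of $\zeta$ are the two points requiring real care; everything else is short naturality bookkeeping.

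It remains to recognise the explicit functor $E\inv$ of the statement as a quasi-inverse. I would first check it is well defined: $e=F\rho\circ\xi_{Fx}$ is idempotent, since applying naturality of $\xi$ (at $F\rho$), then associativity of the action in the form $F\rho\circ FUF\rho=F\rho\circ FU\eps_{Fx}$, then naturality of $\xi$ (at $\eps_{Fx}$) and finally $\eps_{Fx}\circ\xi_{Fx}=\id$ collapses $e^2$ to $e$; naturality of $\xi$ likewise shows each $Ff$ commutes with the relevant idempotents, so $E\inv$ is a functor. Finally I would produce a natural isomorphism $E\inv\circ E\cong\Id_\cat{D}$: for $d\in\cat{D}$ naturality of $\xi$ rewrites the defining idempotent as $FU\eps_d\circ\xi_{FUd}=\xi_d\circ\eps_d$, which splits through $d$ precisely because $\eps_d\circ\xi_d=\id$, whence $E\inv E(d)=\Img(\xi_d\circ\eps_d)\cong d$. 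Since $E$ is already an equivalence and $E\inv\circ E\cong\Id$, the functor $E\inv$ is automatically a quasi-inverse; in particular $E\circ E\inv\cong\Id$ holds with no separate, and otherwise fiddly, verification.
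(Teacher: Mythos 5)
Your proposal is correct, and it takes a genuinely more self-contained route than the paper. The paper's own proof handles the core assertion --- separability of $UF$ and the fact that $E$ is an equivalence --- by citing \cite[Lemma~2.10]{Balmer15}, and only verifies what is specific to $E\inv$: idempotency of $e=F(\rho)\circ\xi_{F(x)}$ (by exactly the chase you describe: naturality of $\xi$ at $F\rho$, the action axiom, naturality of $\xi$ at $\eps_{Fx}$, then $\eps\circ\xi=\id$) and $E\inv E\cong\Id_{\cat D}$, which the paper dismisses as a ``straightforward verification'' and which is precisely your computation $F(U\eps_d)\circ\xi_{FUd}=\xi_d\circ\eps_d$, an idempotent split by the pair $(\eps_d,\xi_d)$. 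What you do differently is to re-prove the cited monadicity result from scratch: separability via $\sigma:=U\xi F$ (both bilinearity identities do collapse to $U(\xi_{Fc}\circ\eps_{Fc})$, one by naturality of $\xi$, the other by naturality of $\eps$); full faithfulness of $E$ from the split counit; and essential surjectivity by realizing each module $(x,\rho)$ as a retract of the free module $E(Fx)$ and splitting the resulting idempotent in $\cat D$. The single step you defer --- $\bbA$-linearity of $\zeta=\bbA\rho\circ\sigma_x\circ\eta_x$ --- does go through and is not a gap: naturality of $\eta$ and of $\sigma$, associativity of the action, the identity $\bbA\mu\circ\sigma\bbA=\sigma\circ\mu$ and a unit axiom reduce $\zeta\circ\rho$ to $\bbA\rho\circ\sigma_x$, while naturality of $\mu$, the identity $\mu\bbA\circ\bbA\sigma=\sigma\circ\mu$ and the other unit axiom reduce $\mu_x\circ\bbA\zeta$ to the same composite; as you anticipated, this is the only place where full bilinearity of $\sigma$, rather than just $\mu\circ\sigma=\id$, is needed. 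The trade-off: the paper's proof is short because monadicity is outsourced to a reference, while yours is longer but self-contained, and it incidentally makes visible that idempotent-completeness is only ever used for $\cat D$, not $\cat C$. Your closing formal step --- deducing $EE\inv\cong\Id_{\cat D}$ from $E\inv E\cong\Id_{\cat D}$ once $E$ is known to be an equivalence --- is the same reduction the paper makes.
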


\begin{proof}
The fact that $E$ is an equivalence is \cite[{Lemma 2.10}]{Balmer15}.
In order to show that the described $E\inv$ is quasi-inverse to~$E$, it suffices to show that it is a well-defined functor and that $E\inv E\cong \Id_{\cat D}$.
The latter is a straightforward verification.
For the former, it suffices to show that $e=F(\rho) \circ \xi_{F(x)}$ is idempotent; then its image exists because~$\cat C$ is idempotent-complete, and the assignment $(x,\rho)\mapsto \Img(e)$ extends to a well-defined functor in the evident way, by sending $f:(x,\rho)\to (x', \rho')$ to $e'F(f)e: \Img(e)\to \Img(e')$.
To see why $e^2=e$, consider the following diagram:
\[
\xymatrix{
F(x) \ar[d]_-{\xi_{Fx}} \ar[r]^-{\xi_{Fx}} &
 FUF(x) \ar[d]_-{FU(\xi_{Fx})} \ar[dr]^-{\id} & \\
FUF(x) \ar[d]_-{F(\rho)} \ar[r]_{\xi_{FUFx}} &
 FUFUF(x) \ar[r]_-{FU(\eps_{Fx} )} \ar[d]_-{FUF(\rho)} &
  FUF(x) \ar[d]^-{F(\rho)}   \\
F(x) \ar[r]_-{\xi_{Fx}} &
 FUF(x) \ar[r]_-{F(\rho)} & F(x).
}
\]
The two left squares commute by naturality of~$\xi$, the right square because~$\rho$ is an action and the triangle because $\xi$ is a section of~$\eps$. The perimeter reads~$e^2=e$.
\end{proof}

\begin{Exa}\label{Ex:ring}
Assume that $(\cat C, \otimes, \unit)$ is a tensor category, by which we mean a symmetric monoidal category with tensor $\otimes$ and unit object~$\unit$. Let $A=(A,\mu, \iota)$ be a \emph{ring object} (\aka \emph{monoid}) in~$\cat C$, that is, an object $A\in \cat C$ equipped with a \emph{multiplication} $\mu: A\otimes A\to A$ and \emph{unit} $\iota: \unit \to A$ such that the associativity axiom $\mu (\mu \otimes \id_A)= \mu (\id_A \otimes \mu)$ and unit axioms $\mu  (\iota \otimes \id_A)= \id_A = \mu (\id_A \otimes \iota)$ hold in~$\cat C$.
Then~$A$ defines a monad $\bbA= A \otimes -: \cat C\to \cat C$ with multiplication $\mu\otimes -$ and unit $\iota\otimes -$ (adjusted by the associativity and unit constraints of~$\otimes$).
In this case, we use the notation $F_A : \cat C\leftrightarrows A\MMod_\cat C: U_A$ for the resulting Eilenberg-Moore adjunction and call $F_A$ the \emph{extension-of-scalars} functor, as in the Introduction.
Thus an $A$-module $(x,\rho)\in A\MMod_\cat C$ consists of an object $x\in \cat C$ equipped with a map $\rho: A\otimes x\to x$ such that $\rho(\mu \otimes \id_x)= \rho(\id_A\otimes \rho)$ and $\rho (\iota\otimes \id_x)=\id_x$ in~$\cat C$.
If the multiplication $\mu:A \otimes A \to A$ admits an $A,A$-linear section $\sigma:A\to A\otimes A$
then $A$ is said to be \emph{separable}. In this case the associated monad $A \otimes-$ will be a separable monad.
\end{Exa}

\subsection*{The projection formula}
Assume that both $\cat C$ and~$\cat D$ are tensor categories and that $F:\cat C\to \cat D$ is a tensor functor ($=$~strong symmetric monoidal functor), \ie~it comes with coherent isomorphisms $\unit\stackrel{\sim}{\to} F(\unit)$ and $F(x)\otimes F(y)\stackrel{\sim}{\to} F(x\otimes y)$. A right adjoint~$U$ of~$F$ inherits the structure of a \emph{lax} tensor functor, consisting of coherent maps $\iota:\unit\to U(\unit)$ and $\lambda:U(x)\otimes U(y)\to U(x\otimes y)$. They are defined by
\[
\iota:
\xymatrix@C=2em{
\unit \ar[r]^-\eta & UF(\unit) \ar[r]^-\sim & U(\unit)
}
\]
\begin{equation}
\label{eq:lambda}%
\lambda:
\xymatrix@C=1.4em{
U(x)\otimes U(y) \ar[r]^-{\eta}
& UF(Ux \otimes Uy) \ar[r]^-{\sim}
& U(FUx \otimes FUy) \ar[rr]^-{U(\eps\,\otimes\,\eps)}
&& U(x\otimes y)
}
\end{equation}
and they are not necessarily invertible. Lax monoidal functors preserve ring objects. In particular, we obtain a commutative ring object $A:=(U(\unit), \mu, \iota)$  in~$\cat C$, where we endow~$U(\unit)$ with the unit map~$\iota$ as above and the multiplication
\begin{equation} \label{eq:unit_monoid}
\mu: \xymatrix@C=3em{
U(\unit) \otimes U(\unit) \ar[r]^-{\lambda} & U(\unit \otimes \unit) \ar[r]^-{\sim} & U(\unit) \,.
}
\end{equation}
The lax monoidal structure of~$U$ also defines a natural transformation
\begin{equation}\label{eq:right_projection_formula}
\xymatrix@C=3em{ \pi : U(y) \otimes x \ar[r]^-{\id \otimes\, \eta} & U(y) \otimes UF(x) \ar[r]^-{\lambda} & U(y \otimes F(x))}\end{equation}
for all $x\in\cat C$ and $y \in\cat D$, which we call the \emph{projection morphism}.

\begin{Def} \label{Def:projection_formula}
We say that the \emph{projection formula holds} for the adjunction $F \dashv U$ when the natural morphism~$\pi:U(x)\otimes y\to U(x\otimes F(y))$ of~\eqref{eq:right_projection_formula} is an isomorphism for all $x \in \cat{C}$ and $y\in \cat{D}$.
\end{Def}

Even when the morphism $\pi$ is not an isomorphism, it compares \emph{two} monads on~$\cat C$: the monad $A\otimes(-)$ induced by the ring object $A=(U\unit, \mu, \iota)$ and the monad $\bbA=UF$ induced by the adjunction. We now prove that the natural transformation $\pi$ is always compatible with those additional structures:

\begin{Lem} \label{Lem:monad_vs_monoid}
With the above notation, the natural map $\pi:U(\unit)\otimes x \to U(\unit \otimes Fx)\cong UF(x)$ is a morphism $A\otimes(-)\to \mathbb A$ of monads on~$\cat C$.
\end{Lem}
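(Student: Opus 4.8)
The plan is to verify directly the two axioms that make the natural transformation $\pi\colon A\otimes(-)\too\bbA$ a morphism of monads, where $A=U(\unit)$ and $\bbA=UF$ carries its usual multiplication $U\eps F$ and unit~$\eta$. Writing $\mu$ and~$\iota$ for the multiplication~\eqref{eq:unit_monoid} and the unit of the ring object~$A$, the two conditions read, at each object $x\in\cat C$,
\begin{equation*}
\pi_x\circ(\iota\otimes\id_x)=\eta_x
\qquad\text{and}\qquad
\pi_x\circ(\mu\otimes\id_x)=U(\eps_{Fx})\circ\pi_{UFx}\circ(\id_A\otimes\pi_x),
\end{equation*}
the right-hand side of the second being the component at~$x$ of the Godement composite $(U\eps F)\circ(\pi\ast\pi)$. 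Throughout I suppress the associativity and unit constraints of the two tensor products together with the identification $U(\unit\otimes F(-))\cong UF(-)$, reasoning as if $\cat C$ and~$\cat D$ were strict; this is harmless by coherence.

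For the unit axiom I would unfold $\pi_x=\lambda_{\unit,Fx}\circ(\id\otimes\eta_x)$ from~\eqref{eq:right_projection_formula}, so that the left-hand side factors as $\bigl(\lambda_{\unit,Fx}\circ(\iota\otimes\id_{UFx})\bigr)\circ(\id_\unit\otimes\eta_x)$. The lax \emph{unit} coherence of the lax monoidal functor $(U,\lambda,\iota)$ says precisely that $\lambda_{\unit,Fx}\circ(\iota\otimes\id_{UFx})$ is the canonical identification $UFx\cong U(\unit\otimes Fx)$; composing with $\id_\unit\otimes\eta_x$ therefore yields~$\eta_x$. This axiom is immediate.

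The multiplication axiom is where the real work lies, and I expect it to be the main obstacle. The decisive structural input is the associativity coherence of $(U,\lambda,\iota)$, namely $\lambda_{a\otimes b,c}\circ(\lambda_{a,b}\otimes\id)=\lambda_{a,b\otimes c}\circ(\id\otimes\lambda_{b,c})$ up to associators. On the left-hand side I would expand $\pi_x$ and substitute the definition of~$\mu$ as $\lambda_{\unit,\unit}$ followed by $U$ of the unit isomorphism $\unit\otimes\unit\cong\unit$; naturality of~$\lambda$ in its first variable then brings the composite into the shape $\lambda_{\unit\otimes\unit,\,Fx}\circ(\lambda_{\unit,\unit}\otimes\eta_x)$, and a single application of the associativity coherence (with $a=b=\unit$ and $c=Fx$) rewrites it as $\lambda_{\unit,\,\unit\otimes Fx}\circ(\id_A\otimes\pi_x)$. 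On the right-hand side I would expand $\pi_{UFx}=\lambda_{\unit,FUFx}\circ(\id\otimes\eta_{UFx})$, push $U(\eps_{Fx})$ inward by naturality of~$\lambda$ in its second variable, and cancel the string $U(\eps_{Fx})\circ\eta_{UFx}=\id$ by the triangle identity $U\eps\circ\eta U=\id$; this again leaves $\lambda_{\unit,\,\unit\otimes Fx}\circ(\id_A\otimes\pi_x)$, so the two sides coincide.

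The one genuine subtlety is the bookkeeping of the unit-coherence isomorphisms buried in the definition of~$\mu$ and in the identification $U(\unit\otimes F(-))\cong UF(-)$: after the reductions above, the two sides differ a priori by $U$ applied to a comparison of the monoidal unit constraints, and this is reconciled by the coherence (triangle) axioms of the monoidal structures. I would therefore package the entire verification as a single commutative diagram assembled from instances of $\lambda$, $\eta$ and $\eps$, in which every elementary cell is one of five facts: naturality of~$\lambda$, naturality of~$\eta$, a triangle identity of the adjunction, the associativity coherence of~$\lambda$, or a monoidal coherence constraint.
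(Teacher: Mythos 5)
Your proposal is correct and takes essentially the same route as the paper: both check the two monad-morphism axioms by reducing each side of the multiplication square to $\lambda\circ(\id\otimes\pi)$, using the lax unit/associativity coherences of $U$, naturality of $\lambda$, and the triangle identity $U\eps\circ\eta U=\id$. The paper's three displayed diagrams encode exactly your reductions, namely the identities $\pi\circ(\mu\otimes\id)=\lambda\circ(\id\otimes\pi)$ and $U\eps F\circ\pi=\lambda$, together with the unit coherence making the triangle commute.
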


\begin{proof}
We must verify that~$\pi$ identifies the units and multiplications of the two monads.
Concretely, we must show that the following diagrams commute in~$\cat C$:
\[
\xymatrix@C=1.5em{
& \unit \otimes x \cong x \ar[dl]_{\iota\, \otimes\, \id} \ar[dr]^{\eta} & \\
U(\unit)\otimes x  \ar[rr]^-{\pi} && UF (x)
}
\quad
\quad
\xymatrix@C=2em{
U(\unit)\otimes U(\unit) \otimes x \ar[d]_{\mu\, \otimes\, \id} \ar[rr]^-{\pi^{(2)}} && UF(UFx) \ar[d]^{U\eps F} \\
U(\unit ) \otimes x \ar[rr]^-{\pi} && UF(x)
}
\]
Here
$\pi^{(2)}:=(\pi_{UF x})(\id \otimes \pi_x)
= (\id\otimes \pi_x)(\pi_{U(\unit)\otimes x})$
 denotes the two-fold application of~$\pi$.
The commutativity of the above triangle follows from that of the diagram
\[\xymatrix @C=1em{
x \ar[r]^-{\sim} \ar[d]_{\eta} & \unit \otimes x \ar[d]^{\id\otimes \eta}\ar[rrr]^{\iota \,\otimes \id} &&& U(\unit) \otimes x \ar[d]^{\id\otimes\, \eta} \ar@/^1pc/[drrrr]^{\pi} &&& \\
UFx \ar[r]^-{\sim} & \unit \otimes UF(x) \ar[rrr]^{\iota\,\otimes \,\id} &&& U(\unit) \otimes UF(x)
\ar@{}[urrr]|{\textrm{def.}}
\ar[rrr]^-\lambda &&& U(\unit \otimes Fx ) \ar[r]^-{\sim} & UFx
		}\]
once we note that the bottom row is the identity; the latter holds because the following diagram commutes (since $U$ is lax monoidal)
	\[\xymatrix@C=4em{ \unit \otimes Uy \ar[d]_-{\sim} \ar[r]^-{\iota\,\otimes\, \id} & U\unit \otimes Uy \ar[d]^\lambda \\
		Uy \ar[r]^-{\sim} & U(\unit \otimes y)
		}\]
for all~$y$ (plug $y:=Fx$). Next we check that the following diagram commutes
\[\xymatrix{
		 U\unit \otimes U\unit \otimes x \ar[d]^{\mu\,\otimes \,\id} \ar[r]^-{\id\,\otimes \,\pi} & U\unit \otimes  UFx  \ar[r]^{\pi} \ar[rd]^-{\lambda} & UFUFx \ar[d]^{U\eps F} \\
		U\unit \otimes x \ar[rr]^-{\pi} && UFx
	}\]
by using the definition of~$\pi=\lambda\,(\id\otimes\eta)$ in~\eqref{eq:right_projection_formula}, commutativity of the diagram
\[\xymatrix@C=4em{
		U\unit \otimes U\unit \otimes x  \ar[d]^{\mu\,\otimes\,\id}\ar[r]^-{\id\otimes\id\otimes\eta} &
		 U\unit\otimes U\unit \otimes UFx  \ar[d]^{\mu\,\otimes\,\id} \ar[r]^-{\id \otimes \lambda} &
		  U\unit \otimes UFx \ar[d]^{\lambda} \\
		U\unit \otimes x  \ar[r]^-{\id \otimes \eta } & U\unit \otimes UFx  \ar[r]^-{\lambda} & UFx
	}\]
which reads $\pi\,(\mu\otimes\id)=\lambda(\id\otimes\pi)$, and commutativity of the diagram
\[\xymatrix@C=4em{
		U\unit \otimes UFx  \ar[dr]_{\id} \ar[r]^-{\id \otimes \eta UF} &
		U\unit \otimes UFUFx  \ar[d]^{\id \otimes U\eps F } \ar[r]^-{\lambda} &
		UFUFx \ar[d]^{U\eps F} \\
		& U\unit \otimes UFx  \ar[r]^-{\lambda} & UFx
	}\]
which reads $\lambda=U\eps F\,\pi$. Here we have suppressed unital isomorphisms for readability.
\end{proof}

We now prove an analogue of Beck Monadicity in our framework\,:
\begin{Thm}\label{Thm:general}
Let $F: \cat C\leftrightarrows \cat D:U$ be an adjunction of idempotent-complete additive tensor categories, where~$F$ is a tensor functor. Assume moreover that:
\begin{itemize}
	\item[(a)] The counit of the adjunction $\eps:FU\to \Idcat{D}$ admits a natural section.
\item[(b)] The projection formula holds $U(x)\otimes y\cong U(x\otimes F(y))$; see Definition~\ref{Def:projection_formula}.
\end{itemize}
Then the adjunction is monadic and the associated monad is isomorphic to the one induced by the commutative ring object $A=(U(\unit), \mu,\iota)$ in~$\cat C$; see~\eqref{eq:unit_monoid}.
Thus there is a (unique) equivalence~$E:\cat D\stackrel{\sim}{\to} A\MMod_\cat C$ identifying the given adjunction $F\dashv U$ with the free-forgetful adjunction $F_A\dashv U_A$, up to isomorphism; see~\eqref{Eq:EM_monad}.
\end{Thm}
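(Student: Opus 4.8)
The plan is to deduce the theorem by combining the two preceding lemmas with the elementary fact that isomorphic monads have isomorphic Eilenberg-Moore categories. Throughout, write $\bbA=UF$ for the monad induced by the adjunction. The substantive content has already been established in Lemmas~\ref{Lem:sep_mon} and~\ref{Lem:monad_vs_monoid}; what remains is to fit the pieces together and to track the compatibility with the free and forgetful functors.

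First I would invoke hypothesis~(a). Since $\cat C$ and~$\cat D$ are idempotent-complete additive and the counit~$\eps$ admits a natural section, Lemma~\ref{Lem:sep_mon} applies and shows that the adjunction is monadic: the Eilenberg-Moore comparison functor $E:\cat D\to\bbA\MMod_{\cat C}$ of~\eqref{Eq:EM_monad} is an equivalence, characterized by $U_\bbA E=U$ and $EF=F_\bbA$.

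Next, since $F$ is a tensor functor, $U$ is lax monoidal and Lemma~\ref{Lem:monad_vs_monoid} provides a morphism of monads $\pi:A\otimes(-)\to\bbA$ on~$\cat C$, where $A=(U\unit,\mu,\iota)$ is the commutative ring object of~\eqref{eq:unit_monoid}. Hypothesis~(b) is precisely the assertion (Definition~\ref{Def:projection_formula}) that~$\pi$ is an isomorphism. Hence $A\otimes(-)$ and~$\bbA$ are isomorphic as monads, which already yields the second assertion of the theorem. It then remains to transport~$E$ across this monad isomorphism. Restriction of scalars along $\pi:A\otimes(-)\isoto\bbA$ gives an isomorphism of categories $\pi^*:\bbA\MMod_{\cat C}\isoto A\MMod_{\cat C}$, sending $(x,\rho)$ to $(x,\rho\circ\pi_x)$; being the identity on underlying objects it satisfies $U_A\pi^*=U_\bbA$, and the monad-morphism axioms of Lemma~\ref{Lem:monad_vs_monoid} turn the components $\pi_c:A\otimes c\to\bbA c$ into a natural isomorphism $F_A\isoto\pi^*F_\bbA$. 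Setting $E':=\pi^*\circ E$ therefore produces an equivalence $\cat D\isoto A\MMod_{\cat C}$ with $U_A E'=U_\bbA E=U$ on the nose and $E'F=\pi^*F_\bbA\cong F_A$; that is, $E'$ identifies the adjunction $F\dashv U$ with $F_A\dashv U_A$ up to isomorphism. Uniqueness of~$E'$ is the usual uniqueness of a comparison functor compatible with the free and forgetful functors.

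There is no deep obstacle here, as the two lemmas carry the weight; the one point demanding care is the bookkeeping of the final paragraph, namely verifying that restriction of scalars along the monad isomorphism~$\pi$ is simultaneously compatible with the forgetful functors (on the nose) and with the free functors (up to the natural isomorphism supplied by~$\pi$), so that the transported functor~$E'$ genuinely intertwines the two adjunctions.
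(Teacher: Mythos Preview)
Your proposal is correct and follows the same approach as the paper: the paper's proof simply says ``combine Lemmas~\ref{Lem:sep_mon} and~\ref{Lem:monad_vs_monoid},'' then records the explicit formula for~$E$. You have spelled out the same combination in more detail, in particular the transport along the monad isomorphism~$\pi$ via restriction of scalars, which the paper leaves implicit.
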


\begin{proof}
Just combine Lemmas~\ref{Lem:sep_mon} and~\ref{Lem:monad_vs_monoid}.
Explicitly, the equivalence $E:\cat D \isoto U(\unit)\MMod_\cat C$ sends
	$d \in \cat D$ to the $U(\unit)$-module $E(d) := U(d)$ with action
	given by
	$U(\unit) \otimes U(d) \otoo{\lambda} U(\unit \otimes d) \simeq Ud$.
\end{proof}

Recall now the situation of the Introduction. In each of the three examples discussed there, we are given suitable groups~$G$ and~$H$, where $H$ is a subgroup of finite index in~$G$. We have associated tensor triangulated categories $\cat C:= \cat C(G)$ and $\cat D:=\cat C(H)$ and a tensor exact restriction functor $F:=\Res^G_H: \cat C(G)\to \cat C(H)$ admitting a right adjoint $U:=\Coind_H^G : \cat C(H)\to \cat C(G)$.
In the remaining three sections, we are going to deduce Theorems~\ref{thm:intro_SH}, \ref{thm:intro_KK} and \ref{thm:intro_DG} from Theorem~\ref{Thm:general}, by verifying in each case that the hypotheses~(a) and~(b) hold. We will sometimes abbreviate the corresponding monad as~$\bbA=\bbA^G_H:=\Coind_H^G\circ\Res^G_H$.

\begin{Rem}
\label{rem:sep}%
Although the splitting of the counit implies (cf.~Lemma~\ref{Lem:sep_mon}) that the monad
$UF \simeq U(\unit) \otimes (-)$ is separable,
it does not follow \emph{a priori} that
the ring object $U(\unit)$ is itself separable. However, in all examples, the ring object $A=A^G_H\simeq U(\unit)$ will be separable with an obvious $A,A$-linear section~$\sigma:A^G_H\to A^G_H\otimes A^G_H$ of its multiplication, which will always be induced by the diagonal map $G/H\to G/H\times G/H$.

This is clearly the simplest explanation for separability of~$U(\unit)$ in the examples. However, one could also expand the above abstract treatment to obtain separability of~$U(\unit)$ from general arguments. Indeed, in all our examples, the functor $F$ also has a \emph{left} adjoint $L$ which is isomorphic to $U$ and the
section $x \to FU (x)$ we construct of the counit of $F \dashv U$ coincides with the
\emph{unit} $x \to FL (x)$ of the $L\dashv F$ adjunction under this isomorphism~$L\simeq U$.
Using compatibility of left and right projection formulas, one can then show that $U(\unit)$ is indeed separable in that case. Further details are left to the interested reader.
\end{Rem}

\bigbreak
\section{Equivariant stable homotopy theory}
\label{se:SH}%

Let $G$ be a compact Lie group and let $\SH(G)$ denote the stable homotopy category of (genuine) $G$-spectra, in the sense of \cite{LMS86}.
This is a compactly generated tensor triangulated category with the smash product of $G$-spectra, $-\smashh -$, and unit $S=\Sigma^\infty S^0$.
For any closed subgroup $H \le G$, we have a \emph{restriction} tensor functor $\Res_H^G : \SH(G) \to \SH(H)$ which admits a left adjoint, \emph{induction}, denoted $G_+ \smashh_H -$, and a right adjoint, \emph{coinduction}, denoted $F_H(G_+,-)$.
The two adjoints are related by the \emph{Wirthm\"uller isomorphism}, which is a natural isomorphism
\begin{equation}\label{eq:wirthmuller_isomorphism}
	\omega_X : F_H(G_+,X) \isoto G_+ \smashh_H (X \smashh S^{-L}) \,,
\end{equation}
defined for any $H$-spectrum $X$, where $L$ is the tangent $H$-representation of $G/H$ at the identity coset $eH$.
(See~\cite[\S XVI.4]{May96} and \cite{May03}.)
Note that if $H$ has finite index in $G$ then $L=0$
and in this case $\omega_X$ provides an
isomorphism between the induction and coinduction functors:
$F_H(G_+, X) \isoto G_+ \smashh_H X $.
\begin{Lem} \label{Lem:proj_SH}
The restriction-coinduction adjunction
$\Res_H^G: \SH(G) \leftrightarrows \SH(H) : F_H(G_+, -)$ between the equivariant stable homotopy categories satisfies the projection formula (Def.\,\ref{Def:projection_formula}).
\end{Lem}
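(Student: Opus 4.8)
The plan is to deduce the projection formula for the right adjoint $U = F_H(G_+,-)$ (coinduction) from the classical projection (Frobenius) formula for the \emph{left} adjoint $G_+\smashh_H(-)$ (induction), using the Wirthm\"uller isomorphism~\eqref{eq:wirthmuller_isomorphism}. Since $H$ has finite index in $G$, the tangent representation occurring in~\eqref{eq:wirthmuller_isomorphism} vanishes, so $\omega_X\colon F_H(G_+,X)\isoto G_+\smashh_H X$ is a natural isomorphism identifying coinduction with induction; in particular $U$ commutes with arbitrary coproducts.

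First I would recall that induction satisfies a projection (Frobenius) formula: there is a natural isomorphism
\[
p_{X,Y}\colon (G_+\smashh_H X)\smashh Y \isoto G_+\smashh_H (X\smashh \Res^G_H Y)
\]
for $X\in\SH(H)$ and $Y\in\SH(G)$, given on points by $[g,x]\smashh y\mapsto [g,\,x\smashh g^{-1}y]$ (see~\cite{LMS86}, \cite{May96}). This is precisely the left-hand analogue of the projection morphism of Definition~\ref{Def:projection_formula}, now for the left adjoint.

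The main step is then to verify the compatibility asserting that, under the Wirthm\"uller isomorphism, the abstract projection morphism $\pi$ of~\eqref{eq:right_projection_formula} is carried to $p$, that is, $\omega_{X\smashh\Res^G_H Y}\circ\pi_{X,Y} = p_{X,Y}\circ(\omega_X\smashh\id_Y)$. Granting this equality, the three maps $p_{X,Y}$, $\omega_X$ and $\omega_{X\smashh\Res^G_H Y}$ are all isomorphisms, whence $\pi_{X,Y}$ is an isomorphism and the lemma follows.

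I expect this compatibility to be the principal obstacle, since $\pi$ is defined purely from the lax monoidal structure $\lambda$ of the right adjoint (via~\eqref{eq:lambda} and~\eqref{eq:right_projection_formula}) in terms of the unit $\eta$ and counit $\eps$, whereas $p$ and $\omega$ are concrete point-set constructions for the left adjoint; reconciling them requires unwinding $\lambda$ and tracing it through the Wirthm\"uller equivalence. To keep the verification finite I would exploit naturality and exactness: for fixed $X$, both the source and target of $\pi_{X,-}$ are exact, coproduct-preserving functors of $Y$, so it suffices to check the square on the compact generators $Y=\Sigma^\infty G/K_+$ (and their suspensions), where $\Res^G_H Y$, the relevant smash products, and the adjunction (co)unit all admit explicit orbit-wise descriptions, reducing the coherence statement to a tractable diagram chase. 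Alternatively, one may invoke the general principle alluded to in Remark~\ref{rem:sep}, that the left and right projection morphisms are mates of one another and hence are isomorphisms simultaneously once $L\cong U$.
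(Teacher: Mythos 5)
Your strategy---transport the projection formula across the Wirthm\"uller isomorphism from the known induction (untwisting) isomorphism $p$---is genuinely different from the paper's proof, and it could in principle be completed, but as written it has a gap at exactly the step you flag as the principal obstacle, and your two proposed ways around it do not work as stated. First, the compatibility $\omega_{X\smashh\Res^G_H Y}\circ\pi_{X,Y}=p_{X,Y}\circ(\omega_X\smashh\id_Y)$ is \emph{not} a formal consequence of merely having an isomorphism between induction and coinduction: $\pi$ is built from the unit/counit of $\Res^G_H\dashv F_H(G_+,-)$ via the lax structure~\eqref{eq:lambda}, while $p$ comes from the other adjunction, and an arbitrary natural isomorphism $L\cong U$ need not intertwine these structures. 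It is a property of the specific Wirthm\"uller map, and establishing such monoidal compatibilities of $\omega$ is essentially the content of the delicate work in \cite{May03} and \cite{FauskHuMay03}; in particular your fallback of ``invoking Remark~\ref{rem:sep}'' is not available, since that remark only asserts such a compatibility in passing and explicitly leaves the details to the reader---quoting it here would be close to circular. Second, your reduction to generators checks the wrong kind of statement: equality of two natural transformations cannot be verified on a set of compact generators, because the full subcategory of objects on which two natural transformations agree need not be triangulated. What \emph{can} be bootstrapped from generators is the isomorphism property: if the square commutes for $Y$ in a generating set, then $\pi_{X,Y}$ is invertible for those $Y$, and the subcategory $\{\,Y : \pi_{X,Y}\text{ invertible}\,\}$ is localizing (five lemma, plus both functors in $Y$ being exact and coproduct-preserving, the latter using $F_H(G_+,-)\cong G_+\smashh_H(-)$ to see that coinduction preserves coproducts). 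So the argument must be restructured as: square on generators $\Rightarrow$ $\pi$ invertible on generators $\Rightarrow$ $\pi$ invertible everywhere; and the explicit orbit-wise diagram chase through $\omega$, $\lambda$, $\eta$, $\eps$ still remains to be done.

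For comparison, the paper avoids the compatibility question entirely. It cites \cite[Prop.\,3.2]{FauskHuMay03}, a purely formal duality argument showing that $\pi$ is invertible whenever the $\SH(H)$-variable $X$ is dualizable ($=$ compact), uses the Wirthm\"uller isomorphism only for the soft statement that $F_H(G_+,-)$ preserves coproducts, and then runs precisely the localizing-subcategory bootstrap described above---in the variable $X$ rather than your $Y$. This has the same skeleton as your (repaired) argument, but the input on compact objects comes for free from dualizability instead of from a point-set verification of how $\pi$ interacts with $\omega$. If you wish to complete your route, the orbit-wise check is finite and tractable, but the dualizability route is strictly lighter and is the one you should prefer.
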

\begin{proof}
As noted in \cite[Rem.\,1.2]{May03}, this follows from the Wirthm\"uller isomorphism. We briefly recall the argument. By \cite[Prop.\,3.2]{FauskHuMay03} the projection map
\[
\pi : Y \smashh F_H( G_+,X) \longrightarrow F_H(G_+, \Res^G_H(Y)\smashh X)
\]
as in~\eqref{eq:right_projection_formula} is invertible whenever $X$ is a dualizable (\ie\ compact) object of~$\SH(H)$.
Smashing with any object preserves coproducts. Hence so does $F_H(G_+,-)$, by the Wirthm\"uller isomorphism~\eqref{eq:wirthmuller_isomorphism}.
Thus the two functors (in~$X$) that are compared by~$\pi$ are both exact and commute with coproducts.
Since $\SH(H)$ is compactly generated, it follows that $\pi$ is invertible for arbitrary $X$ as well.
\end{proof}
\begin{Lem}\label{Lem:splitting_SH}
	Let $G$ be a compact Lie group and let $H \le G$ be a closed subgroup of finite index in $G$.
	Then the counit of the restriction-coinduction adjunction
between $\SH(G)$ and $\SH(H)$ has a natural section.
\end{Lem}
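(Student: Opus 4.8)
The plan is to exploit that, in finite index, the restriction functor is \emph{ambidextrous}: its right adjoint (coinduction) agrees with its left adjoint (induction). I would transport the unit of the induction--restriction adjunction across the Wirthm\"uller isomorphism to manufacture a section of the coinduction counit.

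Write $\eps:\Res^G_H F_H(G_+,-)\to \Id_{\SH(H)}$ for the counit of the restriction--coinduction adjunction that we wish to split, and let $\eta:\Id_{\SH(H)}\to \Res^G_H(G_+\smashh_H -)$ be the unit of the induction--restriction adjunction $(G_+\smashh_H -)\dashv\Res^G_H$. Since $[G:H]<\infty$, the tangent representation $L$ vanishes and the Wirthm\"uller isomorphism~\eqref{eq:wirthmuller_isomorphism} specializes to a natural isomorphism $\omega_X:F_H(G_+,X)\isoto G_+\smashh_H X$ between coinduction and induction. I would then take as candidate section the natural transformation
\[
\xi_X:\quad X\ \otoo{\eta_X}\ \Res^G_H(G_+\smashh_H X)\ \otoo{\Res^G_H(\omega_X\inv)}\ \Res^G_H F_H(G_+,X),
\]
which is visibly natural in~$X$ (this is the map alluded to in Remark~\ref{rem:sep}).

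It remains to check the single identity $\eps\circ\xi=\id$. Unwinding the adjunction $\Res^G_H\dashv F_H(G_+,-)$, the composite $\Phi:=\eps\circ\Res^G_H(\omega\inv):\Res^G_H(G_+\smashh_H -)\to\Id$ is precisely the transpose of $\omega\inv$, so the task becomes to show that $\Phi$ is a retraction of~$\eta$. As $[G:H]<\infty$ the $G$-set $G/H$ is finite and discrete; decomposing the $H$-spectrum $\Res^G_H(G_+\smashh_H X)$ along the $H$-orbits of $G/H$ (a Mackey/double-coset decomposition) isolates a canonical untwisted summand $X$ corresponding to the $H$-fixed coset $eH$, together with summands induced from the remaining orbits. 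Under this splitting $\eta_X$ is the inclusion of the $eH$-summand, and I would identify $\Phi_X$ with the projection onto that same summand, whence $\Phi\circ\eta=\id$.

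The delicate point, and the step I expect to cost the most, is the compatibility of the Wirthm\"uller isomorphism with this decomposition: one must confirm that the transported counit $\Phi$ is genuinely the projection onto the $eH$-summand (equivalently, that the counit of coinduction is ``evaluation at the identity coset''), rather than some other retraction. This can be settled either by tracing the explicit construction of~$\omega$ in the Wirthm\"uller/Fausk--Hu--May framework, or by verifying the corresponding triangle-type identity at the point-set level; granting it, $\eps\circ\xi=\id$ and $\xi$ is the desired natural section.
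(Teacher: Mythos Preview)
Your approach is sound but genuinely different from the paper's. The paper constructs the section explicitly at the point-set level: since $G/H$ is discrete, the $H$-space $G_+$ splits as $(G\smallsetminus H)\sqcup H_+$, and for any based $H$-space $X$ one defines $\xi_X(x)\in F_H(G_+,X)$ by $\xi_X(x)(g)=gx$ for $g\in H$ and $\xi_X(x)(g)=*$ otherwise. This is visibly a section of the evaluation-at-$e$ counit; the paper then checks that these maps assemble into a map of spectra (coinduction being defined spacewise, via a small compatibility diagram) and that the splitting passes to the homotopy category because coinduction preserves weak equivalences.

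Your route via ambidexterity and the Wirthm\"uller isomorphism is more conceptual and, as you correctly observe, is exactly what Remark~\ref{rem:sep} alludes to---that remark asserts (without proof) that the paper's explicit $\xi$ coincides with the induction unit transported along~$\omega$. The cost is the ``delicate point'' you flag: verifying that $\Res^G_H(\omega)$ carries the identity-coset summand of the double-coset decomposition to the identity-coset summand is not formal from the mere existence of~$\omega$, and unwinding it amounts to a computation with the explicit Wirthm\"uller map of roughly the same weight as the paper's direct argument. The paper's approach buys elementarity (no Wirthm\"uller machinery, only the discreteness of $G/H$); yours buys a uniform conceptual explanation that matches the pattern in the other two sections of the paper, at the price of one deferred compatibility check.
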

\begin{proof}
Since $G/H$ is finite, hence discrete, the $H$-space $G_+$ decomposes as a coproduct \mbox{$(G-H)\sqcup H_+$} and we can define for any based $H$-space $X$ a \emph{continuous} $H$-equivariant map $\xi_X : X \to F_H(G_+,X)$ by
\[\xi_X(x)(g) = \begin{cases} gx & \text{if } g\in H\\
			* & \text{if } g\notin H.
		\end{cases}\]
This map is natural in $X$ and defines a section of the counit
\[\begin{array}{cccc}
\eps_X :& F_H(G_+,X) & \too & X
\\
& \varphi & \longmapsto & \varphi(1)
\end{array}
\]
of the space-level restriction-coinduction adjunction.
At the level of spectra, recall from \cite[\S II.4, p.~77]{LMS86} that coinduction is defined spacewise
without the need to spectrify---that is, for an $H$-spectrum $D$, the $G$-spectrum $F_H(G_+,D)$ is defined for every $G$-representation~$V$ in the indexing universe by $F_H(G_+,D)(V) := F_H(G_+,D(V))$ where the right-hand side is the space-level coinduction functor.
One checks that the maps $\xi_{D(V)}$ define a map of \mbox{$H$-spectra} $D \to F_H(G_+,D)$ by using the definition of the structure maps of $F_H(G_+,D)$ and the commutativity of
\begin{equation}\label{eq:SH_spectrum_detail}
	\begin{gathered}
	\xymatrix @C=4em{
			F(S^V,X) \ar[r]^-{\xi_{F(S^V,X)}} \ar[dr]_-{F(S^V,\xi_X)\quad} & F_H(G_+,F(S^V,X)) \ar[d]_-{\cong}^-{\phi^{-1}} \\
			& F(S^V,F_H(G_+,X)).
		}
\end{gathered}
	\end{equation}
	Here $\phi^{-1}$ is the $G$-homeomorphism defined on page 76 of \cite{LMS86} and
	the commutativity of
	diagram~\eqref{eq:SH_spectrum_detail}
	follows immediately from the definitions.
	In this way, we have constructed a section of the counit of the
	restriction-coinduction adjunction between the categories of $G$-spectra
	and $H$-spectra.
	Coinduction preserves weak equivalences so this splitting passes without difficulty
	to a splitting of the counit of the adjunction between homotopy categories: $\SH(G) \adjto \SH(H)$.
\end{proof}

\begin{proof}[Proof of Theorem~\ref{thm:intro_SH}.]
The theorem follows from Theorem~\ref{Thm:general}, since its hypotheses~(a) and~(b) have been verified in Lemmas~\ref{Lem:splitting_SH} and~\ref{Lem:proj_SH}.
It remains only to describe the ring object~$A=(A,\mu,\iota)$ in $\SH(G)$.
By definition, we have $A=F_H(G_+,S)$ and the Wirthm\"uller isomorphism identifies it with the $G$-equivariant suspension spectrum $G_+\smashh_H S \cong \Sigma^\infty(G/H)_+$. Under this isomorphism, the multiplication in~$F_H(G_+,S)$ becomes the multiplication $A^G_H\otimes A^G_H\to A^G_H$ announced in the Introduction. Under the isomorphism $\Sigma^\infty(G/H)_+\smashh\Sigma^\infty(G/H)_+\cong\Sigma^\infty(G/H\times G/H)_+$ it is given by the map $(\gamma,\gamma')\mapsto \gamma$ when $\gamma=\gamma'$ and $*$ otherwise; see~\eqref{eq:mu}. An obvious section~$\sigma$ is given by $\gamma\mapsto (\gamma,\gamma)$, showing that $A^G_H$ is indeed separable as a ring object. See Remark~\ref{rem:sep}.
\end{proof}

\bigbreak
\section{Equivariant KK-theory}
\label{se:KK}%

We begin with some recollections on equivariant KK-theory. Details can be found in~\cite{Meyer08}
and the references therein.

Let $G$ be a second countable locally compact Hausdorff group.
For short, we use the term \emph{$G$-algebra} to mean a (topologically) separable\footnote{A C*-algebra is \emph{separable} if it admits a countable subset which is dense (for the norm topology). This is not related to the separability of monads and rings discussed in Section~\ref{se:prepa}.} complex C*-algebra equipped with a continuous left $G$-action $G\times A\to A$ by $*$-isomorphisms.
We denote by $\Alg(G)$ the category of $G$-algebras and $G$-equivariant $*$-homomorphisms (\emph{morphisms} for short).
It is a symmetric monoidal category when equipped with the minimal tensor product $-\otimes -$ (\ie, the completion of the algebraic tensor product $-\otimes_\mathbb C-$ with respect to the minimal C*-norm), where $G$ acts diagonally on tensor products: $g(a\otimes b):= ga \otimes gb$.

The \emph{$G$-equivariant Kasparov category}, here denoted $\KK(G)$, has the same objects as $\Alg(G)$ and its Hom sets are Kasparov's $G$-equivariant bivariant K-theory groups $\Hom_{\KK(G)}(A,B)=KK^G(A,B)$ with composition given by the so-called Kasparov product.
It is a tensor
triangulated category admitting all countable coproducts, so in particular it
is an idempotent-complete additive category.  It comes equipped with a
canonical tensor functor $\Alg(G)\to \KK(G)$ which is the identity on objects.
We will not distinguish notationally between a morphism of $\Alg(G)$ and its canonical image in~$\KK(G)$.

\begin{Cons}\label{Cons:KK}
Let $H$ be a closed subgroup of~$G$ and assume already, for simplicity, that the quotient~$G/H$ is a finite discrete space (see Remark~\ref{Rem:IndCoInd} for more general results).
By restricting $G$-actions to~$H$, we obtain a \emph{restriction functor} $\Res^G_H\colon \Alg(G)\to \Alg(H)$.
Define \emph{coinduction}  $\Coind_H^G: \Alg(H)\to \Alg(G)$ as follows.
Its value on an $H$-algebra~$B$ is the $G$-algebra
\[
\Coind_H^G(B):= \Cont_b(G,B)^H
\]
of those bounded continuous functions $f:G\to B$ that are $H$-invariant: $f(hx)=hf(x)$ for all $h\in H$ and $x\in G$.
This is again a separable C*-algebra with the supremum norm and the pointwise algebraic operations.
The left $G$-action is given by $(g\cdot f)(x) := f(xg)$.
The functoriality is obtained by composing functions with morphisms $B\to B'$.
For all $A\in \Alg(G)$ and $B\in \Alg(H)$, we define the unit and counit natural transformations by the ``usual" formulas:
\begin{align*}
& \eta_A : A \to \Coind_H^G \Res^G_H(A) ,
 \quad a \mapsto \eta_A(a):=(G \ni t\mapsto ta \in A)   \\
& \eps_B  : \Res^G_H \Coind_H^G(B) \to B ,
 \quad f\mapsto \eps_B(f):=f(1)
\end{align*}
for $a\in A$ and $f\in \Coind_H^G(B)$.
\end{Cons}

\begin{Lem}
\label{lem:xi-proj-KK}%
The functor $\Res^G_H:\Alg(G)\to \Alg(H)$ is left adjoint to $\Coind_H^G:\Alg(H)\to \Alg(G)$, with the above $\eta$ and $\eps$ as unit and counit. Moreover:
\begin{itemize}
\item[(a)] The counit $\eps$ admits a natural section.
\item[(b)] The adjunction satisfies the projection formula (Def.\,\ref{Def:projection_formula}).
\end{itemize}
\end{Lem}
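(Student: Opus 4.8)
The plan is to establish the adjunction directly through the triangle identities and then verify (a) and (b) by hand, the key geometric input being that $G/H$ finite discrete forces $H$ to be an \emph{open} (and closed) subgroup of~$G$. For the adjunction itself, I would first check that $\eta$ and $\eps$ are well-defined natural transformations: $\eta_A(a)=(t\mapsto ta)$ is continuous and bounded (the action is continuous and isometric) and $H$-invariant since $(ht)a=h(ta)$, and it is a $G$-equivariant $*$-homomorphism; dually $\eps_B(f)=f(1)$ is an $H$-equivariant $*$-homomorphism (here $f(h)=hf(1)$ supplies equivariance). It then remains to verify the triangle identities, both of which are immediate: $\eps_{\Res A}(\Res\eta_A(a))=\eta_A(a)(1)=a$, and, evaluating at $x\in G$, $\Coind(\eps_B)(\eta_{\Coind B}(f))(x)=(x\cdot f)(1)=f(x)$. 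This establishes $\Res\dashv\Coind$ with the stated unit and counit.

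For (a), since $H$ is open and closed I would define, for $b\in B$, the function
\[
\xi_B(b)(x)=\begin{cases} xb & \text{if } x\in H,\\ 0 & \text{if } x\notin H,\end{cases}
\]
which is continuous precisely because $H$ is open, bounded because the action is isometric, and $H$-invariant. One checks that $\xi_B$ is an $H$-equivariant $*$-homomorphism, natural in~$B$, and that $\eps_B\xi_B(b)=\xi_B(b)(1)=b$; thus $\xi$ is the required natural section of the counit. This mirrors exactly the construction in Lemma~\ref{Lem:splitting_SH}.

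For (b), I would exploit the finiteness of $G/H$ to reduce $\pi$ to a diagonal of automorphisms. Choosing right coset representatives $G=\bigsqcup_{i=1}^{n}Hx_i$ (with $x_1=1$; finitely many since $|H\backslash G|=|G/H|$), an $H$-invariant function is determined by its values at the~$x_i$, giving C*-algebra isomorphisms $\Coind(B)\isoto B^{\oplus n}$ and $\Coind(B\otimes\Res A)\isoto (B\otimes A)^{\oplus n}$ via $f\mapsto (f(x_i))_i$. Unwinding $\pi=\lambda\,(\id\otimes\eta)$ from~\eqref{eq:right_projection_formula}, with the lax structure $\lambda$ of coinduction given by pointwise multiplication, yields the explicit formula $\pi(\phi\otimes a)(x)=\phi(x)\otimes xa$. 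Since the minimal tensor product distributes over finite direct sums, tracing $\pi$ through the above identifications turns it into the diagonal automorphism $\bigoplus_{i=1}^{n}(\id_B\otimes\alpha_{x_i})$, where $\alpha_{x_i}(a)=x_ia$; each $\id_B\otimes\alpha_{x_i}$ is a $*$-automorphism of $B\otimes A$, so $\pi$ is invertible.

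I expect the main difficulty to be bookkeeping rather than conceptual depth: one must handle the C*-algebraic technicalities carefully --- continuity of the defining functions (which is exactly where openness of $H$ enters), boundedness in the supremum norm, and the compatibility of the minimal tensor product with finite products --- and one must correctly identify the abstract projection morphism of~\eqref{eq:right_projection_formula} with the pointwise formula before diagonalizing it. The finiteness of $G/H$ is precisely what makes both the direct-sum decomposition and the continuity of $\xi_B$ available.
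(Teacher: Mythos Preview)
Your proposal is correct and follows essentially the same approach as the paper: the section $\xi_B$ is defined identically, and for the projection formula you both compute $\pi(\phi\otimes a)(x)=\phi(x)\otimes xa$ and then trivialize $\Coind$ via a choice of coset representatives to reduce $\pi$ to an invertible map on a finite direct sum. Your emphasis on the openness of~$H$ and the C*-technicalities is a bit more explicit than the paper's write-up, but the argument is the same.
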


\begin{proof}
This adjunction is well-known (cf.\ \cite[{p.\,231}]{MeyerNest06} or \cite[{Prop.\,38}]{Meyer11}) and works equally fine with $G/H$ compact, although we could not locate the explicit unit and counit in the literature. The verifications that the above~$\eta$ and~$\eps$ are well-defined and satisfy the triangle equalities are immediate.

Since $G/H$ is assumed discrete, for each $b\in B$ the formula
\begin{equation}
\label{eq:xi-KK}%
G\ni t\; \mapsto \;\xi_B(b)(t) := \left\{
\begin{array}{cl}
tb & \textrm{ if }t\in H
\\
0 & \textrm{ if }t\notin H
\end{array}
\right.
\end{equation}
yields a well-defined map $\xi_B(b)\in \Cont_b(G, B)^H$.
Moreover, the assignment $b \mapsto \xi_B(b)$ defines an $H$-equivariant morphism $\xi_B: B\to \Cont_b(G, B)^H$ and therefore a natural transformation $\xi: \Id_{\Alg(H)}\to \Res^G_H\Coind^G_H$.
Since $\eps_B \xi_B (b)= \xi_B(b)(1)=1b=b$ for all $b\in B$, we see that~$\xi$ provides the natural section claimed in part~(a).

Let us prove part~(b).
By unfolding the definitions, we see that the present incarnation of the projection map~\eqref{eq:right_projection_formula} is the following morphism of $G$-algebras:
\begin{align*}
\pi: \quad \Coind_H^G(B)\otimes A & \longrightarrow \Coind_H^G(B \otimes \Res_H^G(A)) \\
 f\otimes a
 \quad & \longmapsto \quad
   \pi(f\otimes a)=\big(G \ni t \mapsto f(t) \otimes ta \big)
\end{align*}
for all $B\in \Alg(H)$ and $A\in \Alg(G)$. The above formula defines~$\pi$ on simple algebraic tensors, and extends uniquely to the minimal tensor product by linearity and continuity.
Once again, the fact that this is an isomorphism is well-known to the experts but the details are hard to find in the literature. For $G/H$ finite it is actually easy. Choose a full set $R\subset G$ of representatives modulo~$H$.
Then, forgetting actions, the inclusion $R\hookrightarrow G$ induces a natural (!) isomorphism of (non-equivariant) C*-algebras $\rho: \Coind_H^G(D)\stackrel{\sim}{\to} \prod_{r\in R} D$ for all $D\in \Alg(H)$, by $\rho(f) = (f(r))_r$.
We thus obtain the following commutative square of C*-algebras
\[
\xymatrix@R=1.5em{
\Coind_H^G (B) \otimes A \ar[r]^-{\rho \otimes \id}_-\cong  \ar[d]_-{\pi} &
 \left( \prod_{r\in R} B \right) \otimes A \ar[d]  \\
\Coind_H^G(B \otimes \Res^G_H A) \ar[r]^-{\rho}_-\cong &
  \prod_{r\in R} (B \otimes A)
}
\]
where the right vertical map is defined, on simple tensors, by $(b_r)_r\otimes a \mapsto (b_r\otimes ra)_r$.
The latter is invertible because $R$ is finite, hence so is~$\pi$.
\end{proof}

\begin{Rem}\label{Rem:IndCoInd}
If $G/H$ is compact, but not necessarily discrete, Construction \ref{Cons:KK} still works verbatim to provide the right adjoint of restriction, but some non-trivial analysis (using that $H$ acts properly on~$G$) is needed to prove that the functor yields separable C*-algebras and satisfies the projection formula.
When $H$ is any closed subgroup, without any hypothesis on~$G/H$, it is convenient to consider an
 ``induction'' functor $\Ind_H^G: \Alg(H)\to \Alg(G)$ defined by the subalgebra $\Ind_H^G (B) := \{f\in \Cont_b(G,B)^H\mid (t\mapsto \|f(t)\|) \in \Cont_0(G/H)\}$ of $\Coind_H^G(B)$.
 The same analytical arguments apply to show that $\Ind_H^G$ takes separable values and satisfies the projection formula, in full generality.
 Clearly $\Ind_H^G=\Coind_H^G$ when $G/H$ is compact, so it is right adjoint to~$\Res_H^G$ in this case. If instead $G/H$ is discrete then $\Ind_H^G$ becomes, at the level of the Kasparov categories, \emph{left} adjoint to restriction, thus earning its name; see \cite[\S2.6]{Meyer11}.
 It is not known whether there exists, unconditionally, a left or right adjoint to restriction on the Kasparov categories. In our view, this anomalous behavior --- quite unlike the situation in representation theory or equivariant stable homotopy --- is a cost of the endemic countability hypotheses required by the analytical constructions traditionally involved with KK-theory, which prevent the Kasparov categories from admitting arbitrary small  coproducts.
\end{Rem}

\begin{proof}[Proof of Theorem~\ref{thm:intro_KK}]
We are assuming that $G/H$ is finite, hence discrete, so we can apply Lemma~\ref{lem:xi-proj-KK}.
We claim that the conclusions of Lemma~\ref{lem:xi-proj-KK} still hold at the level of KK-theory, not only at the level of algebras and algebra morphisms.

Indeed, recall that the canonical functor $\Alg(G)\to \Alg(G)[W_G^{-1}]= \KK(G)$ is a localization of categories, obtained by inverting precisely the set~$W_G$ of \emph{$G$-equivariant KK-equivalences}, and similarly for~$H$.
This follows immediately from Meyer's universal property of equivariant KK-theory~\cite{Meyer00}, and from the following easy (and well-known) observation: each of the three properties of an additive functor $F: \Alg(G)\to \cat C$ for which $\KK(G)$ is universal --- namely, homotopy invariance, C*-stability, and split exactness --- can be expressed by the property that~$F$ sends a suitable class of morphisms of algebras to isomorphisms of~$\cat C$.

It is known that $\Res_H^G(W_G)\subset W_H$ and $\Coind_H^G(W_H)\subset W_G$ (see~\cite[\S4.1]{Meyer08}).
Hence restriction and coinduction yield well-defined functors $\Res_H^G: \KK(G)\to \KK(H)$ and $\Coind_H^G: \KK(H)\to \KK(G)$, which are again adjoint by the (canonical images of the) same unit and counit~$\eta$ and~$\eps$. Similarly, the projection isomorphism~$\pi$ and the section~$\xi$ of the counit also pass to~$\KK$.
We therefore have separable monadicity $E : \KK(H) \stackrel{\sim}{\longrightarrow} A \MMod_{\KK(G)}$ by Theorem~\ref{Thm:general},
where $A=\Coind_H^G(\bbC)=\Cont(G/H)$ is the $G$-algebra of continuous functions on~$G/H$
equipped with its pointwise multiplication and identity. This is simply the usual underlying ring structure of $\Cont(G/H)$ as a C*-algebra, which is isomorphic to its $\bbC$-linear dual algebra~$\bbC(G/H)$ as announced in Theorem~\ref{thm:intro_KK}.
Multiplication on~$A^G_H=\bbC(G/H)$ is again given by Formula~\eqref{eq:mu}. Its separability is again guaranteed by the morphism $\sigma:A^G_H\to A^G_H\otimes A^G_H$ given by $\gamma\mapsto \gamma\otimes\gamma$.
\end{proof}

\begin{Rem}
\label{rem:E-KK}%
We can describe the Eilenberg-Moore equivalence more explicitly.
By construction, the Eilenberg-Moore functor $E$ for the monad $\bbA=\Coind_H^G \Res_H^G$ is given by $ E(B)=(\Coind_H^G(B), \Coind_H^G(\eps_B))$ for all $B\in \KK(H)$.
Under the isomorphism of monads $\pi: A\otimes(-)\stackrel{\sim}{\to} \bbA$, this $\Cont(G/H)$-action becomes the morphism
\[
\xymatrix@1{
 \Cont(G/H)\otimes \Coind_H^G(B) \ar[r]^-{\pi}_-{\sim} &
\Coind_H^G \Res^G_H \Coind_H^G ( B)
\ar[rr]^-{\Coind_H^G(\eps_B)} &&
\Coind_H^G(B)
}
\]
which sends $u\otimes f$ to the function
$\eps_B \circ (x\mapsto u(x)xf)  \in \Coind^G_H(B)$, \ie, to the product $ uf = (x \mapsto u(x)f(x))$.
Thus the action is simply given by the pointwise multiplication of functions.
The quasi-inverse~$E^{-1}$ is described in Lemma~\ref{Lem:sep_mon}.
\end{Rem}

\begin{Rem}
An equivalence quite like our functor~$E^{-1}$ has already been described, under the name \emph{compression}, both for $G$-C*-algebras in~\cite[Lem.\,12.3 ff.]{GHT00} and also in the purely algebraic context of $
G$-rings in~\cite[\S10.3]{CortinasEllis14}. For $H$ a (finite) subgroup of a (countable) discrete~$G$, but with $G/H$ not necessarily finite, the compression functor is defined for $G$-algebras that are ``proper over~$G/H$'' and yields a quasi-inverse of induction (at least at the level of algebras). Here induction refers to a certain functor from $H$-algebras to $G$-algebras which is usually \emph{not} right adjoint to restriction.
We suspect this induction-compression equivalence is related to our Theorem~\ref{thm:intro_KK}, although the details are not yet clear to us.
\end{Rem}

\bigbreak
\section{Equivariant derived categories}
\label{se:DG}%

Let $G$ be a discrete group, \eg\ a finite one, which acts on a (locally) ringed space~$S=(S,\cO_S)$, \eg\ a scheme. For every $g\in G$, we simply denote by $g:S\isoto S$ the corresponding isomorphism of ringed spaces, which involves compatible ring isomorphisms $g^\sharp:\cO_S(V)\isoto \cO_S(g V)$ for all $g\in G$ and $V\subseteq S$ open. For every sheaf $M$ on~$S$, the sheaf $g^*M$ is given by $g^*M\,(V)=M(g V)$. For every $g_1,g_2\in G$, we have an \emph{equality} $(g_1g_2)^*=g_2^*g_1^*$, where we could also accept an isomorphism with coherence. This equality will lighten some of the discussion below.

\begin{Def}
A \emph{$G$-equivariant sheaf of $\cO_S$-modules} is a pair $(M,\varphi)$ where $M\in\cO_S\MMod$ is a sheaf of $\cO_S$-modules and $\varphi=(\varphi_g)_g$ is a collection of $\cO_S$-linear isomorphisms
$$
\varphi_g: M\isoto g^*M
$$
indexed by~$g\in G$ such that $\varphi_{g_1g_2}=g_2^*(\varphi_{g_1})\circ \varphi_{g_2}$ for every $g_1,g_2\in G$. As usual, we often write $M$ instead of~$(M,\varphi)$ and we call $\varphi$ the ``action" of $G$ on~$M$, keeping in mind that $G$ moves the underlying sheaf.
A morphism of $G$-equivariant sheaves $f:(M,\varphi)\to (M',\varphi')$ is a morphism $f:M\to M'$ of $\cO_S$-modules such that $\varphi'_g\circ f=g^*(f)\circ \varphi_g$ for every $g\in G$. We denote by $\Shv(G;S)$ the category of \mbox{$G$-equivariant} sheaves of $\cO_S$-modules on~$S$. It is an abelian category, with a faithful exact functor $\Res^G_1:\Shv(G;S)\to \cO_S\MMod$, which forgets the $G$-equivariance.

Since $G$ is discrete, we can define the \emph{$G$-equivariant derived category of~$S$} to be the derived category $\Der(G;S):=\Der(\Shv(G;S))$ of the above abelian category.
\end{Def}

\begin{Rem}
	Everywhere below, one can replace $\cO_S$-modules by quasi-coherent ones (or coherent ones) if $S$ is a (noetherian) scheme. Similarly, one can put boundedness conditions on the derived categories, or conditions on the homology, etc. The statements remain true as long as the restriction-coinduction adjunction preserves those subcategories. Note that the ring $A^G_H$ that we are going to produce is a complex having a finite-dimensional free $\cO_S$-module concentrated in degree zero --- so it will usually belong to all such choices of subcategories.
	We leave such variations on the theme to the interested readers.

	Also, the $\cO_S$-module structure
	on the various equivariant sheaves that we will construct is not problematic, and it always comes as a second layer, once the ``sheaf-part" and the ``$G$-part" of the story are clear. We shall therefore emphasize the latter and leave most of the former as easy verifications.
\end{Rem}

\begin{Rem}
	The category $\Shv(G;S)$ is monoidal via the tensor product of $\cO_S$-modules and ``diagonal action": modulo the identification $g^*(M\otimes_{\cO_S}M')\cong g^*M\otimes_{\cO_S}g^*M'$, one defines $(\varphi\otimes\varphi')_g$ as $\varphi_g\otimes\varphi'_g$. This induces a left-derived tensor product on the derived category (that we do not really use in this glorious generality, since we really only need to tensor with the flat object~$A^G_H$).
\end{Rem}

\begin{Cons}
Let $H\leq G$ be a subgroup. We have an obvious exact restriction functor $\Res^G_H:\Shv(G;S)\to \Shv(H;S)$ which induces the equally obvious $\Res^G_H:\Der(G;S)\to \Der(H;S)$. We construct the right adjoint $\Coind_H^G:\Shv(H;S)\to \Shv(G;S)$ by explicit formulas. Let $N=(N,\psi)\in\Shv(H;S)$; in particular we have $\psi_h:N\isoto h^*N$ for every $h\in H$. For every open $V\subseteq S$, we define the $\cO_S$-module
\begin{equation}
\label{eq:Coind}%
(\Coind_H^GN)(V):=\SET{(s_t)_{t}\in\prod_{t\in G}t^*N(V)}{t^*\psi_h(s_t)=s_{ht}\ \forall\ t\in G, h\in H}
\end{equation}
whose $G$-action is given on each open by $\varphi_g\big((s_t)_{t\in G}\big):=(s_{tg})_{t\in G}$, observing that $s_{tg}\in (tg)^*N(V)=g^*t^*N(V)=t^*N(g V)$. It is easy to verify that $\Coind_H^GN$ remains a sheaf (it is a limit), \ie~we do not need sheafification. The definition of $\Coind_H^G$ on morphisms is straightforward.

We define the counit $\eps_N:\Res^G_H\Coind_H^GN\too N$ by mapping $(s_t)_{t\in G}$ to $s_1\in N(V)$ on each open $V\subseteq S$. We define the unit $\eta_M:M\to \Coind_H^G\Res^G_HM$ by mapping $m\in M(V)$ to $(\varphi_t(m))_{t\in G}$. These define natural transformations $\eta:\Id_{\Shv(G;S)}\too \Coind_H^G\Res^G_H$ and $\eps:\Res^G_H\Coind_H^G\too \Id_{\Shv(G;S)}$.
\end{Cons}

\begin{Lem}
\label{lem:xi-proj-DG}%
The functor $\Res^G_H:\Shv(G;S)\to \Shv(H;S)$ is left adjoint to $\Coind_H^G:\Shv(H;S)\to \Shv(G;S)$, with the above $\eta$ and $\eps$ as unit and counit. Moreover,
\begin{enumerate}
\item The counit $\eps$ admits a natural section.
\item If moreover $G/H$ is finite, then the adjunction satisfies the projection formula (Def.\,\ref{Def:projection_formula}).
\end{enumerate}
\end{Lem}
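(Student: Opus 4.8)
The plan is to verify the three claimed assertions of Lemma~\ref{lem:xi-proj-DG} by direct computation with the explicit formulas for $\Coind_H^G$, $\eta$ and $\eps$ given in the preceding Construction. The adjunction and its section (parts before and including~(1)) should require no finiteness and follow from pointwise manipulations, while the projection formula~(2) is where the finiteness of $G/H$ becomes essential, and I expect it to be the main obstacle.

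First I would establish the adjunction itself by checking the two triangle identities $\eps_{\Coind M}\circ \Coind(\eta_M)=\id$ and $\Res(\eps_N)\circ\eta_{\Res N}=\id$ directly on sections. Tracing an element $m\in M(V)$ through $\eta$ and then the relevant coinduction map, and tracing $(s_t)_t$ through $\eps$ and $\eta$, each identity reduces to the defining relations $\varphi_{g_1g_2}=g_2^*(\varphi_{g_1})\circ\varphi_{g_2}$ and the compatibility $t^*\psi_h(s_t)=s_{ht}$; these are routine once the indices are chased carefully, so I would state them as immediate verifications. The adjunction being established with the stated unit and counit, I would then turn to part~(1).

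For part~(1) I would mimic the topological and KK-theoretic sections~$\xi$ (cf.\ Lemma~\ref{Lem:splitting_SH} and~\eqref{eq:xi-KK}): define $\xi_N : N \to \Res^G_H\Coind_H^G N$ on sections over each open $V$ by sending $n\in N(V)$ to the tuple $(s_t)_{t\in G}$ with $s_t := t^*\psi_h(n)$ when $t=h\in H$ and $s_t := 0$ when $t\notin H$. I would check that this tuple satisfies the defining relation of~\eqref{eq:Coind}, that the assignment is $H$-equivariant and natural in~$N$, and that $\eps_N\circ\xi_N=\id_N$, which holds because $s_1=\psi_e(n)=n$. This construction uses only that $H$ is a subgroup (picking out the coset of the identity), so no finiteness is needed here, exactly as in the other two settings.

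The hard part will be part~(2), the projection formula, which is precisely where I expect finiteness of $G/H$ to enter. The strategy is to choose a set $R\subseteq G$ of representatives for the right cosets $H\backslash G$ and observe that, forgetting the $G$-action, evaluation at the representatives gives a natural isomorphism $\Coind_H^G N \cong \bigoplus_{r\in R} r^*N$ of $\cO_S$-modules (a product over $R$, which coincides with the direct sum precisely because $R$ is finite); this parallels the isomorphism $\rho$ used in the KK-theory proof. Under this identification I would unwind the projection morphism $\pi$ of~\eqref{eq:right_projection_formula} into the concrete map $\Coind_H^G(N)\otimes_{\cO_S} M \to \Coind_H^G(N\otimes_{\cO_S}\Res^G_H M)$ and exhibit it, componentwise over $R$, as built from the identity on each summand together with the equivariance isomorphisms $\varphi$ of $M$. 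The key point is that $\otimes_{\cO_S}$ commutes with the \emph{finite} direct sum $\bigoplus_{r\in R}$ but not with an infinite product, so $\pi$ is an isomorphism exactly when $R$ is finite. I would assemble this into a commutative square, as in Lemma~\ref{lem:xi-proj-KK}, whose two horizontal arrows are the above finite-biproduct isomorphisms and whose right-hand vertical arrow is manifestly invertible, thereby concluding that $\pi$ is an isomorphism whenever $G/H$ is finite.
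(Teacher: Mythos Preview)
Your proposal is correct and follows essentially the same route as the paper's proof: the section $\xi$ is defined exactly as in~\eqref{eq:xi-DG} (your formula $s_t := t^*\psi_h(n)$ should read simply $s_t := \psi_t(n)$ for $t\in H$, as your own check $s_1=\psi_e(n)=n$ confirms), and the projection formula is established by the same coset-representative decomposition $\Res^G_1\Coind_H^G N \cong \prod_{r\in R} r^*N$ together with the commutation of $\otimes$ with a \emph{finite} product, assembled into precisely the commutative square you describe. The paper phrases the last step as verifying that $\Res^G_1(\pi)$ is an isomorphism (using that $\Res^G_1$ detects isomorphisms), which is exactly your ``forgetting the $G$-action''.
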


\begin{proof}
Set $\xi:\Id_{\Shv(H;S)}\to \Res^G_H\Coind_H^G$ on an object $(N,\psi)\in\Shv(H;S)$ and on an open~$V$ to be the homomorphism $N(V) \to (\Coind_H^GN)(V)$, $n\mapsto (\xi(n)_t)_{t\in G}$ defined by the formula
\begin{equation}
\label{eq:xi-DG}%
\xi(n)_t := \left\{
\begin{array}{cl}
\psi_t(n) & \textrm{ if }t\in H
\\
0 & \textrm{ if }t\notin H.
\end{array}
\right.
\end{equation}
The $H$-equivariance of this morphism is easy to check and it is clearly natural in~$N$ and splits~$\eps$, since $\xi(n)_1=\psi_1(n)=n$. This proves~(a).

For the projection formula, let us assume that $G/H$ is finite. Recall that $\Res^G_1:\Shv(G;S)\to \cO_S\MMod$ detects isomorphisms. So it will suffice to prove that $\Res^G_1(\pi)$ is an isomorphism, where $\pi:(\Coind_H^GN)\otimes M \to \Coind_H^G(N\otimes \Res_H^G M)$ is the morphism of~\eqref{eq:right_projection_formula}, and this for every $N\in\Shv(H;S)$ and $M\in\Shv(G;S)$. One can verify that the morphism $\pi$ is given by (the sheafification of)
$$
\begin{array}{ccc}
(\Coind_H^GN)\otimes M & \too & \Coind_H^G(N\otimes \Res_H^G M)
\\
(s_t)_{t\in G}\otimes m & \longmapsto & \big(s_t\otimes \varphi_t(m)\big)_{t\in G}
\end{array}
$$
on simple tensors. We need to check that the above is an isomorphism of sheaves on~$S$, ignoring the $G$-equivariance. Let $R\subset G$ be a full set of representatives for~$G/H$. We claim that we have an isomorphism $\proj_R:\Res^G_1\Coind_H^G\isoto\prod_{r\in R}r^*$, which is given on every open $V\subseteq S$ by the composite of obvious inclusion and projection $(\Res^G_1\Coind_H^G N)(V)\hook \prod_{t\in G}t^*N(V) \onto \prod_{r\in R} r^*N(V)$, \ie~simply by $(s_t)_{t\in G}\mapsto (s_r)_{r\in R}$ on elements. Its inverse maps $(s_r)_{r\in R}$ to $(r^*\psi_h(s_r))_{t\in G}$ where each $t\in G$ is written in a unique way as $t=hr$ for some $h=h(t)\in H$ and some $r=r(t)\in R$. The defining property of the elements in~$(\Coind_H^GN)(V)$ shows that this is a bijection. Moreover, the following diagram of $\cO_S$-modules commutes:
$$
\xymatrix@R=2em@C=4em{\Res^G_1(\Coind_H^G N)\otimes \Res^G_1(M) \ar[d]^{\cong} \ar[r]_-{\proj\otimes 1}^-\simeq
&(\prod_{r\in R}r^* N)\otimes M \ar[d]^{\cong }_-{(\dagger)}
\\
\Res^G_1((\Coind_H^G N)\otimes M) \ar[d]^{\Res^G_1(\pi)}
& \prod_{r\in R}(r^* N\otimes M ) \ar[d]_\simeq^-{\prod_{r}\displaystyle \id\otimes\varphi_r}
\\
\Res^G_1\big(\Coind_H^G(N\otimes \Res^G_H M)\big) \ar[r]^-{\proj}_-{\simeq}
& \prod_{r\in R}r^* N\otimes r^* M.
}
$$
Note that the isomorphism~$(\dagger)$ uses finiteness of~$R\simeq G/H$ to commute product and tensor. We conclude that $\Res^G_1(\pi)$, hence $\pi$, is an isomorphism as wanted.
\end{proof}

\begin{proof}[Proof of Theorem~\ref{thm:intro_DG}]
	We can use Theorem~\ref{Thm:general} on the sheaf level (before deriving), since its hypotheses were checked in Lemma~\ref{lem:xi-proj-DG}. Let us identify the ring object $A=\Coind(\unit)$, where the $\otimes$-unit $\unit$ of $\Shv(H;S)$ is $\cO_S$ equipped with the trivial $H$-action: each $\psi_h$ is the isomorphism $h^\sharp:\cO_S\isoto h^*\cO_S$ given with the isomorphism of ringed spaces \mbox{$h:S\isoto S$}. Explicitly, $A(V)=\SET{(s_t)_t\in \prod_{t\in G}\cO_S(t V)}{s_{ht}=h^\sharp s_t\textrm{ for all }t\in G,\, h\in H}$, where $h^\sharp:\cO_S(t V)\isoto h^*\cO_S(t V)=\cO_S(ht V)$ is as above on the open~$t V$. Its multiplication~\eqref{eq:unit_monoid} is simply mapping $(s_t)_t\otimes (s'_t)_t$ to $(s_ts'_t)_t$. To show that this ring object~$A=\Coind(\unit)$ is isomorphic to the announced one $A^G_H=\oplus_{G/H}\cO_S$, we need to clarify the latter. On each open, if we name the basis~$\{e_\gamma\}_{\gamma\in G/H}$, it is given by $A^G_H(V)=\oplus_{\gamma\in G/H}\cO_S(V)\cdot e_\gamma$. The $G$-action $\varphi_g$ on $A^G_H$ simply maps $e_{\gamma}$ to $e_{g\gamma}$ and applies $g^\sharp:\cO_S(V)\isoto g^*\cO_S(V)$ on the coefficients. The isomorphism $A^G_H\isoto A$ can now be given on each open $V$ by mapping $e_\gamma$ to $(s_t)_t$ where $s_t=0$ if $t\notin \gamma$ and $s_t=1$ if $t\in \gamma$. An inverse is then given by mapping $(s_t)_t\in A(V)$ to $\sum_{\gamma\in G/H} s_\gamma e_\gamma$, where $s_\gamma:=(t^\sharp)\inv(s_t)\in \cO_S(V)$ for \emph{any} choice of $t\in\gamma$. The condition on $(s_t)_t\in A(V)$ shows that these coefficients $s_\gamma$ are well-defined.
Following the above isomorphism, one sees that the multiplication on~$A^G_H$ is given on the $\cO_S$-basis by the now familiar formula~\eqref{eq:mu}. Separability again follows from the $A^G_H,A^G_H$-linear morphism $\sigma:A^G_H\to A^G_H\otimes A^G_H$, defined on the $\cO_S$-basis by $e_\gamma\mapsto e_\gamma\otimes e_\gamma$.

	Finally, we need to derive the adjunction. This is easy since both functors $\Res^G_H$ and $\Coind_H^G$ are exact (the latter can be checked by postcomposing with $\Res^G_1$). Similarly, $A^G_H\otimes-$ is exact since $A^G_H$ is flat, the underlying $\cO_S$-module being a finite sum of copies of~$\cO_S$. It follows that all adjunctions, units, counits, sections thereof ($\xi$), etc, derive on the nose, without needing any resolution.
\end{proof}

\begin{Rem}
\label{rem:E-DG}%
By construction~\eqref{Eq:EM_monad}, the equivalence $E:\Der(H;S)\too A^G_H\MMod_{\Der(G;S)}$ is given explicitly by $E(N)=(\Coind_H^G(N), \Coind_H^G(\eps_N))$ for all $N\in \Shv(H;S)$, and similarly for complexes $N\in\Der(H;S)$, degreewise.
Under the isomorphism of monads $\pi: A^G_H\otimes(-)\stackrel{\sim}{\to} \bbA$, this $\cO_S(G/H)$-action becomes the morphism
\[
\xymatrix@R=1em{
 \cO_S(G/H)\otimes \Coind_H^G(N) \ar[r]^-{\pi} &
\Coind_H^G \Res^G_H \Coind_H^G ( N)
\ar[rr]^-{\Coind_H^G(\eps_N)} &&
\Coind_H^G(N)\,.
}
\]
It is given for every $\gamma\in G/H$ and every $(s_t)_{t}\in(\Coind_H^GN)(V)$ as in~\eqref{eq:Coind} by the formula $\gamma\otimes (s_t)_t\mapsto (s^\gamma_t)_t$ where $s^\gamma_t=s_t$ when $t\in\gamma$ and zero otherwise. The converse $E\inv$ can be described as in Lemma~\ref{Lem:sep_mon}.
\end{Rem}

\bigbreak
\section{Counterexamples}
\label{se:counterexample}%
The goal of this section is to prove Theorem~\ref{thm:intro_counterex}. We consider a connected compact Lie group~$G$ and a non-trivial finite subgroup~$H\leq G$. We want to show that the restriction-coinduction adjunction $\SH(G) \adjto \SH(H)$ is not monadic. It suffices to show that the right adjoint $\Coind_H^G:\SH(H)\to \SH(G)$ is not faithful. This reduces to proving that the counit $\eps_D:\Res^G_H\Coind_H^G D\to D$ is not split for some object~$D\in \SH(H)$. This reduction is well-known: the morphism $\tau$ appearing in a distinguished triangle
$\csbull\otoo{\eps}\csbull\otoo{\tau}\csbull\too\csbull$
maps to zero under~$\Coind_H^G$ (since $\Coind_H^G(\eps)$ is split, by the unit-counit relation) but $\tau$ is not zero (since $\eps$ itself is not split).

First we reduce the problem to a statement about spaces.
\begin{Lem}\label{Lem:counterexample_derivedcounit}
	Let $\mathcal C$ and $\mathcal D$ be model categories
	in which every object is fibrant, let $d$ be a cofibrant object in $\mathcal D$, and let $f^* : \mathcal
	C \adjto \mathcal D :f_*$ be a Quillen adjunction.
	If the counit $\underline{\eps}_d : \LL f^* \RR f_* d \to d$ of the derived adjunction is split epi
	as a morphism in $\Ho \mathcal D$ then the original counit $\eps_d : f^*f_* d \to d$ is split epi up to homotopy:
	there is a map $\xi_d : d \to f^* f_* d$ in $\mathcal D$ such that $\eps_d \circ \xi_d$ is homotopic to $\id_d$.
\end{Lem}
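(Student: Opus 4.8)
The plan is to unwind the construction of the derived counit and then lift the hypothetical splitting from $\Ho\mathcal{D}$ back to $\mathcal{D}$, using the standard description of hom-sets in the homotopy category between cofibrant-fibrant objects. First I would record the explicit formula for $\underline{\eps}_d$. Since $f^*$ is left Quillen, $\LL f^*$ is computed by cofibrant replacement, and since $f_*$ is right Quillen, $\RR f_*$ is computed by fibrant replacement. Because every object of $\mathcal{D}$ is fibrant, no fibrant replacement of $d$ is needed, so $\RR f_* d = f_* d$. Choosing a cofibrant replacement $q \colon Q(f_* d) \to f_* d$ in $\mathcal{C}$ (a weak equivalence with $Q(f_* d)$ cofibrant), we get $\LL f^* \RR f_* d = f^* Q(f_* d)$, and the derived counit $\underline{\eps}_d$ is represented in $\Ho\mathcal{D}$ by the honest composite
\[
f^* Q(f_* d) \xrightarrow{\,f^*(q)\,} f^* f_* d \xrightarrow{\,\eps_d\,} d .
\]
This is the usual mate computation: under the adjunction bijection, the identity of $\RR f_* d$ is represented by $q$, which is sent to $\eps_d \circ f^*(q)$.

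Next I would check that all the relevant objects are bifibrant. Every object of $\mathcal{D}$ is fibrant; the object $f^* Q(f_* d)$ is moreover cofibrant, since $f^*$ is left Quillen and $Q(f_* d)$ is cofibrant; and $d$ is both cofibrant (by hypothesis) and fibrant. By Quillen's theorem that $\Ho\mathcal{D}(X,Y)$ is the set of homotopy classes $[X,Y]$ whenever $X$ is cofibrant and $Y$ is fibrant, the assumed section $s \colon d \to \LL f^* \RR f_* d = f^* Q(f_* d)$ of $\underline{\eps}_d$ in $\Ho\mathcal{D}$ is represented by an \emph{actual} morphism $\tilde{s} \colon d \to f^* Q(f_* d)$ of $\mathcal{D}$.

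Then I would simply set $\xi_d := f^*(q) \circ \tilde{s} \colon d \to f^* f_* d$ and verify that it works. Applying the localization functor $\mathcal{D} \to \Ho\mathcal{D}$ and using the displayed formula for $\underline{\eps}_d$,
\[
[\,\eps_d \circ \xi_d\,] = [\,\eps_d \circ f^*(q)\,] \circ [\,\tilde{s}\,] = \underline{\eps}_d \circ s = \id_d .
\]
Thus the endomorphism $\eps_d \circ \xi_d$ of the bifibrant object $d$ becomes the identity in $\Ho\mathcal{D}$; since $\Ho\mathcal{D}(d,d) = [d,d]$, it is therefore homotopic to $\id_d$, which is exactly the asserted conclusion.

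I expect the only genuinely delicate point to be the bookkeeping in the first step: pinning down precisely which (co)fibrant replacements enter the derived counit, and checking that the hypothesis that every object is fibrant is what guarantees both $\RR f_* d = f_* d$ and the fibrancy of $f^* Q(f_* d)$. Everything after that is a formal consequence of the cofibrant-fibrant hom-set description, and no model-categorical input beyond these standard facts is required.
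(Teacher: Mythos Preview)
Your proof is correct and follows essentially the same approach as the paper's: identify the derived counit via a cofibrant replacement of $f_* d$, use bifibrancy to represent the homotopy-category section by an actual map, and then compose. Your version is slightly more streamlined than the paper's, since you use directly that $f^* Q(f_* d)$ is already cofibrant (as $f^*$ is left Quillen) rather than taking a further cofibrant replacement, but the idea is the same.
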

\begin{proof}
	If $\gamma :\Gamma c \to c$ is the cofibrant replacement then $\LL f^* c = f^* \Gamma c$
	while $\RR f_*d = f_*d$ since every object is fibrant.
	Recall (\eg~from the proof of \cite[Thm.\,8.5.18]{Hirschhorn99}) that the
	derived adjunction $\LL f^* : \Ho \mathcal C \adjto \Ho \mathcal D : \RR f_*$ is given as follows:
    \begin{align*}
		\Ho \mathcal D (\LL f^*c,d) &= \Ho \mathcal D (f^* \Gamma c,d)
		= \pi(\Gamma f^* \Gamma c,\Gamma d)
		\cong \pi(\Gamma f^* \Gamma c,d)
		\cong \pi(f^* \Gamma c,d)\cong \\
		&\cong \pi(\Gamma c,f_* d)
		\cong \pi(\Gamma c,\Gamma f_* d)
		= \Ho \mathcal C (c,f_* d)
	    = \Ho \mathcal C (c,\RR f_* d).
	\end{align*}
	Taking $c = \RR f_*d$ and chasing the identity we find that the counit
	$\LL f^* \RR f_* d \to d$ is the homotopy class of a map
	$\underline{\eps}_d : \Gamma f^* \Gamma f_* d \to \Gamma d$ with the property that
	\[\xymatrix{
			\Gamma f^* \Gamma f_* d \ar[r]^{\gamma} \ar[d]_{\underline{\eps}_d} & f^* \Gamma f_* d \ar[r]^{f^* \gamma} & f^* f_* d \ar[d]^{\eps_d} \\
			\Gamma d \ar[rr]_{\gamma} && d
		}\]
	commutes up to homotopy.
	A section $d \to \LL f^* \RR f_* d$ of~$\underline{\eps}_d$ is the homotopy class of a map $\underline{\xi}_d :\Gamma d \to \Gamma f^* \Gamma f_* d$
	such that $\underline{\eps}_d \circ \underline{\xi}_d \sim \id_{\Gamma d}$.
	If $d$ is cofibrant then the cofibrant replacement $\gamma : \Gamma d \to d$ is a homotopy equivalence.
	For any quasi-inverse $\delta : d \to \Gamma d$ of~$\gamma$,
	the composite
	\[ \xymatrix@C=3em{ d \ar[r]^-{\delta}  &\Gamma d \ar[r]^-{\underline{\xi}_d}& \Gamma f^* \Gamma f_* d \ar[r]^{\gamma} & f^* \Gamma f_*d \ar[r]^{f^*\gamma} & f^*f_* d}\]
	defines a map $\xi_d : d \to f^* f_* d$
	such that $\eps_d \circ \xi_d \sim \id_d$.
\end{proof}
\begin{Lem}\label{Lem:counterexample_loops}
	Let $H$ be a closed subgroup of a compact Lie group $G$. If $D$ is an $H$-CW spectrum
	such that the counit $\underline{\eps}_D$ of the restriction-coinduction adjunction
	$\SH(G) \adjto \SH(H)$ is a split epi as a morphism in $\SH(H)$,
	then the space-level counit $\eps_{\Omega^\infty(D)}$ is a split epi up to homotopy in the
	category of based $H$-spaces.
\end{Lem}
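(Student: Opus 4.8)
The plan is to obtain the space-level splitting by applying Lemma~\ref{Lem:counterexample_derivedcounit} at the level of \emph{spectra} and then pushing the resulting point-set section down to spaces via $\Omega^\infty$. First I would take the two model categories of the previous lemma to be the (LMS) categories of $G$-spectra and $H$-spectra with their stable model structures, and set $f^\ast=\Res^G_H$ with right adjoint $f_\ast=F_H(G_+,-)$. Here $D$ is cofibrant because it is an $H$-CW spectrum, and every LMS spectrum is fibrant (they are $\Omega$-spectra by construction), so the standing hypotheses of Lemma~\ref{Lem:counterexample_derivedcounit} hold. Its derived counit is exactly the given morphism $\underline{\eps}_D$ in $\SH(H)$, which by assumption is split epi. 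Lemma~\ref{Lem:counterexample_derivedcounit} then yields a map $\xi_D:D\to \Res^G_H F_H(G_+,D)$ of $H$-spectra with $\eps_D\circ\xi_D\simeq\id_D$; that is, a section of the \emph{point-set} counit up to homotopy.

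Next I would apply the infinite loop space functor $\Omega^\infty$. Since $\Omega^\infty$ is the topologically enriched right adjoint of $\Sigma^\infty$ and every spectrum is fibrant, it preserves path objects and hence the homotopy relation; applying it to the relation above gives $\Omega^\infty(\eps_D)\circ\Omega^\infty(\xi_D)\simeq\id_{\Omega^\infty(D)}$ as maps of based $H$-spaces.

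It then remains to identify $\Omega^\infty(\eps_D)$ with the space-level counit $\eps_{\Omega^\infty(D)}$. The essential point is that coinduction of spectra is built spacewise, $F_H(G_+,D)(V)=F_H(G_+,D(V))$ as recalled in the proof of Lemma~\ref{Lem:splitting_SH}, and that on the fibrant ($\Omega$-spectrum) object $D$ the functor $\Omega^\infty$ is simply the zeroth space $D(0)$. As restriction commutes with $\Omega^\infty$ on the nose, this yields a natural identification $\Omega^\infty\bigl(\Res^G_H F_H(G_+,D)\bigr)=\Res^G_H F_H(G_+,\Omega^\infty(D))$, under which the spectrum counit (evaluation at $1\in G$ in each spectrum degree) becomes, in degree $0$, precisely the space-level counit $\eps_{\Omega^\infty(D)}$. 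Hence $\xi:=\Omega^\infty(\xi_D)$ is a based $H$-map with $\eps_{\Omega^\infty(D)}\circ\xi\simeq\id$, which is the assertion of the lemma.

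The hard part will be making this last interchange rigorous: one must check that passing to $\Omega^\infty$ commutes with coinduction and carries the spectrum counit to the space counit, with no fibrant or cofibrant replacement disturbing the identification. This is exactly where the spacewise construction of $F_H(G_+,-)$ and the fact that LMS spectra are already $\Omega$-spectra (so that $\Omega^\infty=(-)(0)$ requires no derivation on these objects) do the work. A secondary, routine point is to confirm that $\Omega^\infty$ genuinely preserves the homotopy produced by Lemma~\ref{Lem:counterexample_derivedcounit}, which follows from its being a topologically enriched right adjoint, hence compatible with path objects.
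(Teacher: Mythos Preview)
Your proposal is correct and follows essentially the same route as the paper: apply Lemma~\ref{Lem:counterexample_derivedcounit} to the spectrum-level restriction-coinduction adjunction (using that $H$-CW spectra are cofibrant and all LMS spectra are fibrant), then push the resulting homotopy-section down via $\Omega^\infty$, using that coinduction is defined spacewise so that $\Omega^\infty F_H(G_+,D)=F_H(G_+,\Omega^\infty D)$ and $\Omega^\infty(\eps_D)=\eps_{\Omega^\infty D}$. The paper records these identifications a bit more tersely and simply states that $\Omega^\infty$ preserves homotopy, but the argument is the same.
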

\begin{proof}
	Let $G \mathcal S$ denote the category of $G$-spectra (in the sense of \cite{LMS86})
	so that $\SH(G) = \Ho G \mathcal S$
	and let $G\mathcal T$ denote the category of based $G$-spaces.
	For any closed subgroup $H \le G$, we have restriction-coinduction adjunctions at the level of spaces and at the level of spectra.
	In both cases, we denote coinduction by $F_H(G_+,-)$.
	One checks immediately from the definition  of $F_H(G_+,-):H\mathcal S \to G \mathcal S$
\cite[\S II.4, p.\,77]{LMS86}
	that
	$\Omega^\infty F_H(G_+,D) = F_H(G_+,\Omega^\infty D)$
	for any $H$-spectrum $D$ and that
	$\Omega^\infty (\eps_D) = \eps_{\Omega^\infty D}$ where
	$\eps$ denotes the counit of the restriction-coinduction
	adjunction.
	If $D$ is an $H$-CW spectrum such that the derived
	counit $\underline{\eps}_D$ in $\SH(H) = \Ho H \mathcal
	S$ is a split epi, then
	Lemma~\ref{Lem:counterexample_derivedcounit}
	implies that the counit $\eps_D$ in $H \mathcal S$ is a split epi up to homotopy.
	The functor $\Omega^\infty : H\mathcal S \to H \mathcal T$ preserves homotopy
	and so we conclude that
	$\Omega^\infty(\eps_D) = \eps_{\Omega^\infty D}$ is
	split epi up to homotopy in $H \mathcal T$.
\end{proof}
\begin{Lem}\label{Lem:counterexample_space}
	Let $H$ be a closed subgroup of a connected topological group $G$, and let~$X$ be a discrete based $H$-space.
	If there exists a continuous map $\xi : X \to F_H(G_+,X)$ such that the composite
	\[\xymatrix@1{ X \ar[r]^-{\xi} &F_H(G_+,X) \ar[r]^-{\eps_X} & X}\]
	is homotopic to $\id_X$ then $X$ is a trivial $H$-space.
\end{Lem}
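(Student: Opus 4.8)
The plan is to play the connectedness of $G$ off against the discreteness of $X$, using the concrete model of the coinduced space. Recall that a point of $F_H(G_+,X)$ is a continuous based map $\varphi\colon G_+\to X$ satisfying $\varphi(hg)=h\varphi(g)$ for all $h\in H$ and $g\in G$ (this being the restriction to $H$ of the left $G$-action on $G_+$), and that the counit is evaluation at the identity, $\eps_X(\varphi)=\varphi(1)$, exactly as in the proof of Lemma~\ref{Lem:splitting_SH}. For each $x\in X$ I shall regard $\xi(x)$ as a single continuous function $G\to X$, namely the restriction to $G\subset G_+$ of the point $\xi(x)$ of the mapping space. The crucial observation is that this function is continuous merely because $\xi(x)$ is by definition a continuous map; in particular I will need neither continuity of $\xi$ itself nor continuity of the evaluation map, which is reassuring given that $G$ is only assumed to be a topological group.

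First I would replace the homotopy $\eps_X\circ\xi\simeq\id_X$ by an honest equality. Because $X$ is discrete, any homotopy $X\times[0,1]\to X$ is constant along $[0,1]$, since each slice $\{x\}\times[0,1]$ has connected, hence singleton, image; thus homotopic maps into $X$ coincide. This yields $\eps_X\circ\xi=\id_X$, \ie\ $\xi(x)(1)=x$ for every $x\in X$.

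Next I would invoke connectedness of $G$. For fixed $x$, the function $\xi(x)\colon G\to X$ is continuous from a connected space into a discrete one, hence constant, so $\xi(x)(g)=\xi(x)(1)=x$ for all $g\in G$. Finally I would feed this into the $H$-equivariance of $\xi(x)$: setting $g=1$ in the relation $\xi(x)(hg)=h\,\xi(x)(g)$ gives $\xi(x)(h)=h\,\xi(x)(1)=hx$, whereas the constancy just established forces $\xi(x)(h)=x$. Hence $hx=x$ for all $h\in H$ and all $x\in X$, which is precisely the statement that $X$ is a trivial $H$-space.

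There is no genuine analytic difficulty in this argument; the only points deserving attention are that discreteness of $X$ must be exploited twice---once to rigidify the homotopy into an equality, and once to force each $\xi(x)$ to be constant on the connected group $G$---and that the $H$-action convention on $G_+$ be kept consistent, so that equivariance reads precisely $\varphi(hg)=h\varphi(g)$ in agreement with the section constructed in Lemma~\ref{Lem:splitting_SH}.
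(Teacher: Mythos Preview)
Your proof is correct and follows essentially the same approach as the paper's: use discreteness of $X$ to turn the homotopy $\eps_X\circ\xi\simeq\id_X$ into an equality, then use connectedness of $G$ to force each $\xi(x)$ to be constant, and finally read off $hx=x$ from $H$-equivariance. The paper's argument is the same but more terse.
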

\begin{proof}
	Let $x \in X$ and consider its image $\xi_x : G_+ \to X$.
	Homotopic maps to $X$ are equal since $X$ is discrete, so
	$\xi_x(e) = \eps_X(\xi_x) = x$.
	It follows that
	$\xi_x(g)=\xi_x(e)=x$ for all $g\in G$
	since $G$ is connected and $X$ is discrete.
	Hence, since $\xi_x$ is $H$-equivariant, $x=\xi_x(h) = h.\xi_x(e) = h.x$ for all $h\in H$.
\end{proof}
\begin{proof}[Proof of Theorem~\ref{thm:intro_counterex}]
	Armed with Lemma~\ref{Lem:counterexample_loops} and Lemma~\ref{Lem:counterexample_space}
we need only
show that there exists an $H$-spectrum~$D$ such that
the based $H$-space $\Omega^\infty D$ is \mbox{$H$-homotopy} equivalent to
a \emph{discrete} based $H$-space with \emph{non-trivial} $H$-action.
Indeed, if $\gamma : \Gamma D \to D$ denotes an $H$-CW-approximation then Lemma~\ref{Lem:counterexample_loops} implies
that the counit of $\Omega^\infty \Gamma D$ splits up to homotopy in the category of based $H$-spaces.
The map $\Omega^\infty \gamma : \Omega^\infty \Gamma D \to \Omega^\infty D$ is a weak $H$-equivalence between spaces
having the \mbox{$H$-homotopy} type of an $H$-CW-complex
and hence is an $H$-homotopy equivalence by Whitehead's theorem.
Moreover, it is clear that the splitting up to homotopy of the counit for a based $H$-space $X$ implies
the splitting of the counit up to homotopy for any based $H$-space $H$-homotopy equivalent to $X$, so we can
apply Lemma~\ref{Lem:counterexample_space} to obtain our contradiction.

Note that we can't just take $D=\Sigma^\infty H_+$ since $\Omega^\infty \Sigma^\infty H_+$ is $Q(H_+) = \prod_H Q(S^0)$
rather than $H_+$.
Nevertheless, recall that for any $H$-Mackey functor $\mathcal M$, there exists an equivariant Eilenberg-MacLane spectrum $\mathcal H \mathcal M$
having the property that
\[\underline{\pi}_q(\mathcal H \mathcal M) = \begin{cases} \mathcal M & \text{if } q=0 \\ 0 & \text{if } q \neq 0 \end{cases}\]
so that $\Omega^\infty \mathcal H\mathcal M$ is an equivariant Eilenberg-MacLane space of type $K(\mathcal M,0)$.
The original construction \cite{LewisMayMcClure81} is elegant but indirect (involving a Brown representability argument).
A very concrete construction has been provided by
\cite{dosSantosNie08pp,dosSantos03}.
Applied to the ``fixed point'' Mackey functor $\underline{M}(H/K) := M^K$ associated to a $\mathbb Z H$-module $M$
one
obtains an $H$-spectrum $\mathcal H\underline{M}$
whose zeroth space
$\Omega^\infty \mathcal H\underline{M}$ is $H$-homotopy equivalent to $M$ regarded as a discrete based $H$-space (having $0$ as the base point).
Taking $M = \mathbb Z H$ to be the regular representation, we thus obtain a discrete non-trivial based $H$-space---provided $H$ itself is non-trivial---and this completes the proof.
\end{proof}

\bibliographystyle{alpha}\bibliography{TG-articles}
\begin{thebibliography}{LMSM86}

\bibitem[Bal11]{Balmer11}
Paul Balmer.
\newblock Separability and triangulated categories.
\newblock {\em Adv. Math.}, 226(5):4352--4372, 2011.

\bibitem[Bal13]{Balmer13ppb}
Paul Balmer.
\newblock Separable extensions in tensor-triangular geometry and generalized
  {Q}uillen stratification.
\newblock Preprint available online [arXiv:1309.1808], 14 pages, 2013.
\newblock To appear in Ann. Sci. {\'E}c. Norm. Sup.

\bibitem[Bal15]{Balmer15}
Paul Balmer.
\newblock Stacks of group representations.
\newblock {\em J. Eur. Math. Soc. (JEMS)}, 17(1):189--228, 2015.

\bibitem[BL94]{BernsteinLunts94}
Joseph Bernstein and Valery Lunts.
\newblock {\em Equivariant sheaves and functors}, volume 1578 of {\em Lecture
  Notes in Mathematics}.
\newblock Springer-Verlag, Berlin, 1994.

\bibitem[CE14]{CortinasEllis14}
Guillermo Corti{\~n}as and Eugenia Ellis.
\newblock Isomorphism conjectures with proper coefficients.
\newblock {\em J. Pure Appl. Algebra}, 218(7):1224--1263, 2014.

\bibitem[dS03]{dosSantos03}
Pedro~F. dos Santos.
\newblock A note on the equivariant {D}old-{T}hom theorem.
\newblock {\em J. Pure Appl. Algebra}, 183(1-3):299--312, 2003.

\bibitem[dSN08]{dosSantosNie08pp}
Pedro~F. dos Santos and Zhaohu Nie.
\newblock A model for equivariant {E}ilenberg-{M}ac {L}ane spectra.
\newblock Preprint available online [arXiv:0804.0264], 23 pages, 2008.

\bibitem[FHM03]{FauskHuMay03}
H.~Fausk, P.~Hu, and J.~P. May.
\newblock Isomorphisms between left and right adjoints.
\newblock {\em Theory Appl. Categ.}, 11:No. 4, 107--131, 2003.

\bibitem[GHT00]{GHT00}
Erik Guentner, Nigel Higson, and Jody Trout.
\newblock Equivariant {$E$}-theory for {$C^*$}-algebras.
\newblock {\em Mem. Amer. Math. Soc.}, 148(703):viii+86, 2000.

\bibitem[Hir03]{Hirschhorn99}
Philip~S. Hirschhorn.
\newblock {\em Model categories and their localizations}, volume~99 of {\em
  Mathematical Surveys and Monographs}.
\newblock American Mathematical Society, Providence, RI, 2003.

\bibitem[LMM81]{LewisMayMcClure81}
G.~Lewis, J.~P. May, and J.~McClure.
\newblock Ordinary {$RO(G)$}-graded cohomology.
\newblock {\em Bull. Amer. Math. Soc. (N.S.)}, 4(2):208--212, 1981.

\bibitem[LMSM86]{LMS86}
L.~G. Lewis, Jr., J.~P. May, M.~Steinberger, and J.~E. McClure.
\newblock {\em Equivariant stable homotopy theory}, volume 1213 of {\em Lecture
  Notes in Mathematics}.
\newblock Springer-Verlag, Berlin, 1986.

\bibitem[May96]{May96}
J.~P. May.
\newblock {\em Equivariant homotopy and cohomology theory}, volume~91 of {\em
  CBMS Regional Conference Series in Mathematics}.
\newblock Published for the Conference Board of the Mathematical Sciences,
  Washington, DC, 1996.
\newblock With contributions by M. Cole, G. Comeza{\~n}a, S. Costenoble, A. D.
  Elmendorf, J. P. C. Greenlees, L. G. Lewis, Jr., R. J. Piacenza, G.
  Triantafillou, and S. Waner.

\bibitem[May03]{May03}
J.~P. May.
\newblock The {W}irthm\"uller isomorphism revisited.
\newblock {\em Theory Appl. Categ.}, 11:No. 5, 132--142, 2003.

\bibitem[Mey00]{Meyer00}
Ralf Meyer.
\newblock Equivariant {K}asparov theory and generalized homomorphisms.
\newblock {\em $K$-Theory}, 21(3):201--228, 2000.

\bibitem[Mey08]{Meyer08}
Ralf Meyer.
\newblock Categorical aspects of bivariant {$K$}-theory.
\newblock In {\em {$K$}-theory and noncommutative geometry}, EMS Ser. Congr.
  Rep., pages 1--39. Eur. Math. Soc., 2008.

\bibitem[Mey11]{Meyer11}
Ralf Meyer.
\newblock Universal coefficient theorems and assembly maps in {$KK$}-theory.
\newblock In {\em Topics in algebraic and topological {$K$}-theory}, volume
  2008 of {\em Lecture Notes in Math.}, pages 45--102. Springer, Berlin, 2011.

\bibitem[MN06]{MeyerNest06}
Ralf Meyer and Ryszard Nest.
\newblock The {B}aum-{C}onnes conjecture via localisation of categories.
\newblock {\em Topology}, 45(2):209--259, 2006.

\end{thebibliography}
\end{document}